\documentclass[onefignum,onetabnum]{siamart190516}


\usepackage{amsfonts}
\usepackage{graphicx}
\usepackage{epstopdf}
\usepackage{algorithmic}
\ifpdf
  \DeclareGraphicsExtensions{.eps,.pdf,.png,.jpg}
\else
  \DeclareGraphicsExtensions{.eps}
\fi


\newsiamremark{remark}{Remark}
\newsiamremark{hypothesis}{Hypothesis}
\crefname{hypothesis}{Hypothesis}{Hypotheses}
\newsiamthm{claim}{Claim}

\headers{Conservation laws with discontinuous flux}{J. Badwaik and A. M. Ruf}

\title{Convergence rates of monotone schemes for conservation laws with discontinuous flux\thanks{Submitted August 23, 2019.
\funding{Both authors have received funding from the European Union’s Framework Programme for Research and Innovation Horizon 2020 (2014-2020) under the Marie Sk{\l}odowska-Curie Grant Agreement No. 642768. In addition, J. Badwaik was supported by the Priority Programme~1648 of the German Science Foundation and A. M. Ruf has received funding from NFR-DAAD grant No.~281268.}}}

\author{Jayesh Badwaik\thanks{Department of Mathematics, University of W\"urzburg, Germany (\email{badwaik.jayesh@gmail.com}).}
\and Adrian M. Ruf\thanks{Seminar for Applied Mathematics, ETH Z\"urich, Switzerland
    (\email{adrian.ruf@sam.math.ethz.ch}).}
}

\usepackage{amsopn}

\ifpdf
\hypersetup{
  pdftitle={Convergence rates for conservation laws with discontinuous flux},
  pdfauthor={J. Badwaik and A. M. Ruf}
}
\fi



\usepackage{units}
\newcommand{\R}{\mathbb{R}}

\newcommand{\Z}{\mathbb{Z}}

\newcommand{\cell}{\mathcal{C}}
\newcommand{\sign}{\operatorname{sign}}

\newcommand{\Lone}{\rm{L}^1}

\newcommand{\TV}{\operatorname{TV}}
\newcommand{\BV}{\mathrm{BV}}

\newcommand{\Dx}{{\Delta x}}
\newcommand{\Dt}{{\Delta t}}
\newcommand{\hf}{{\unitfrac{1}{2}}}

\newcommand{\jphf}{{j+\hf}}
\newcommand{\jmhf}{{j-\hf}}

\newcommand{\Dplust}{D_{+}^t}
\newcommand{\Dminus}{D_{-}}

\newcommand*\diff{\mathop{}\!\mathrm{d}}

\usepackage{tikz}
\usepackage{pgfplots}
\pgfplotsset{width=.38\textwidth,compat=1.12}

\usepackage{subfig}

\definecolor{skyblue1}{rgb}{0.447,0.624,0.812}
\definecolor{plum1}{rgb}{0.678,0.498,0.659}
\definecolor{scarletred1}{rgb}{0.937,0.161,0.161}

\newtheorem*{maintheorem}{Main Theorem}

\usepackage{siunitx}
\usepackage{booktabs}

\usepackage{bbm}


\begin{document}

\maketitle

\begin{abstract}
  We prove that a class of monotone finite volume schemes for scalar conservation laws with discontinuous flux converge at a rate of $\sqrt{\Delta x}$ in $\mathrm{L}^1$, whenever the flux is strictly monotone in $u$ and the spatial dependency of the flux is piecewise constant with finitely many discontinuities.
  We also present numerical experiments to illustrate the main result.
  To the best of our knowledge, this is the first proof of any type of convergence rate for numerical methods for conservation laws with discontinuous, nonlinear flux.

  Our proof relies on convergence rates for conservation laws with initial and boundary value data. Since those are not readily available in the literature we establish convergence rates in that case \emph{en passant} in the Appendix.
\end{abstract}

\begin{keywords}
  hyperbolic conservation laws, discontinuous flux, numerical methods, convergence rate
\end{keywords}

\begin{AMS}
  35L65, 65M08, 65M12, 35R05
\end{AMS}

\section{Introduction}

We prove a convergence rate for a class of monotone, upwind-type finite volume schemes for scalar conservation laws with \emph{discontinuous} flux of the form
\begin{gather}
\begin{aligned}
  u_t + f(k(x),u)_x = 0,& &&(x,t)\in \R\times(0,T),\\
  u(x,0) = u_0(x),& &&x\in \R.
\end{aligned}
\label{conservation law}
\end{gather}
Here, we assume that the flux $f$ is \emph{strictly monotone} in $u$ and has a \emph{discontinuous} spatial dependency through the coefficient $k$ which is \emph{piecewise constant} with finitely many discontinuities.


\begin{maintheorem}
Let $f$ be strictly monotone in $u$ in the sense that $f_u >0$, $k$ piecewise constant with finitely many discontinuities, and $u_0\in(\mathrm{L}^1\cap\BV)(\R)$. Then all monotone finite volume methods with the upwind property which obey the discrete Rankine--Hugoniot condition across the discontinuities of $k$ converge at a rate of $\sqrt{\Dx}$ to the unique entropy solution of the conservation law~\eqref{conservation law}.
\end{maintheorem}
The full theorem is stated in Section~\ref{sec: statement and proof of the main theorem}. Our proof uses the Rankine--Hugoniot condition at the discontinuities of $k$ to break down the problem into finitely many initial-boundary value problems for each of which we will prove a convergence rate using the classical `doubling of variables' technique.


\subsection{Background on conservation laws with discontinuous fluxes}
Problem~\eqref{conservation law} is of great practical interest in several areas of physics and engineering. In particular, it arises in modeling traffic flow on highways with changing road conditions (see \cite{lighthill1955kinematic}), in the modeling of two-phase flow in a porous medium (see \cite{gimse1992solution,risebro1991front}), and in modeling sedimentation processes (see \cite{diehl1996conservation,burger2003front}).

The flux in~\eqref{conservation law} depends on the space variable through a coefficient $k$ which may be discontinuous. The dependence can for example be of the additive type, i.e., $f(k(x),u)=f(u)-k(x)$ (see \cite{greenberg1997analysis}), or of the multiplicative type, i.e., $f(k(x),u)=k(x)f(u)$ which is more common (see e.g. \cite{Towers1}). However, for the sake of generality we do not assume any particular algebraic structure of the flux $f(k(x),u)$ here.
The case we consider in this paper where $k$ is piecewise constant with finitely many discontinuities corresponds to switching from one $u$-dependent flux function, $f^{(i-1)}$, to another, $f^{(i)}$, across a discontinuity $\xi_i$ of $k$.
When $k$ has just one discontinuity -- the so-called `two flux' case -- given by
\begin{equation}
  u_t + (H(x)f(u) + (1-H(x))g(u))_x = 0
  \label{two flux case}
\end{equation}
where $H$ is the Heaviside function was studied in a series of papers by Adimurthi, Mishra, and Gowda (see \cite{Sid2005,adimurthi2005optimal,ADIMURTHI2007310} and references therein). Most notably, in \cite{adimurthi2005optimal}, the authors showed existence of infinitely many $\mathrm{L}^1$-stable semi-groups of solutions to~\eqref{two flux case}.
We remark that, because of the assumption that $k$ is piecewise constant, the convergence rate of monotone schemes for~\eqref{two flux case} will be the building block for the general case of~\eqref{conservation law}.

Equations of type~\eqref{conservation law} have been dealt with extensively in the literature from a purely academic point of view as well as with a specific application in mind. In \cite{gimse1991riemann,gimse1992solution}, Gimse and Risebro calculated solutions for the Riemann problem assuming convexity of the flux in $u$ and used the solutions to show existence of a weak solution for the general Cauchy problem with a front tracking algorithm. Other results based on the front tracking algorithm were obtained in \cite{klingenberg1995convex}, \cite{klausen1999stability}, \cite{BAITI1997161}, \cite{klingenberg2001stability}, \cite{burger2003front},  and in \cite{coclite2005conservation} with a time-dependent discontinuous coefficient.
Out of the aforementioned results, we want to highlight \cite{BAITI1997161} from Baiti and Jensen who proved existence and uniqueness of entropy solutions in the case that the flux is strictly monotone in $u$ which is the case we consider in this paper as well.

The first results for finite volume schemes for~\eqref{conservation law} (assuming a multiplicative spatial dependency) were obtained by Towers in \cite{Towers1,Towers2}. Specifically, in \cite{Towers1}, the author developed staggered versions of the Godunov and Engquist--Osher schemes for the case where $f$ is convex in $u$ and $k$ is strictly positive. In \cite{Towers2} similar results were proved for the case of non-convex fluxes.
In \cite{karlsen2002upwind}, Karlsen, Risebro, and Towers studied~\eqref{conservation law} with an added degenerate parabolic term using an Engquist--Osher-type scheme and in \cite{karlsen2002nonlinear} the authors proved existence of the vanishing viscosity limit using compensated compactness.
In \cite{karlsen2004convergence}, Karlsen and Towers showed convergence of the Lax--Friedrichs scheme for~\eqref{conservation law} (with a time-dependent discontinuous coefficient). They were able to handle very general fluxes and sign-changing coefficients by using compensated compactness.

A general framework for well-posedness of~\eqref{two flux case} was proposed by Andreianov, Karlsen, and Risebro in \cite{andreianov2011theory}.

Lastly, we want to point out that the monotonicity assumption, $ f_u >0$ we use in this paper implies that the equivalent system
\begin{gather*}
  \begin{aligned}
    u_t + f(k,u)_x =0,&\\
    k_t=0&
  \end{aligned}
\end{gather*}
is hyperbolic and not resonant, see \cite{Towers1,Towers2,klausen1999stability,klingenberg1995convex,klingenberg2001stability}.

\subsection{Background on convergence rates}
When dealing with numerical methods for~\eqref{conservation law}, where an approximate solution $u_\Dt$ depends on a grid discretization parameter $\Dx$, having a provable bound of the type
\begin{equation*}
	\|u(T)-u_\Dt(T)\|_{\mathrm{L}^1(\R)} \leq C \Dx^r,
\end{equation*}
-- specifying how fast the numerical scheme converges -- is highly desirable. Specifically, convergence rates can be used for a posteriori error based mesh adaptation \cite{venditti2000adjoint} and optimal design of multilevel Monte Carlo methods \cite{badwaik2019multilevel}.
To this date the only result concerning convergence rates of finite volume schemes for conservation laws with discontinuous flux is due to Wen and Jin and pertains the most basic case of the linear advection equation with piecewise constant wave speed that changes across a single discontinuity, \cite{wen2008convergence}.
So far, in the nonlinear case, convergence rates of finite volume schemes for~\eqref{conservation law} are only available in the absence of a spatial dependency, i.e., $k$ being constant.
The main difficulty in obtaining convergence rates when the flux has a discontinuous spatial dependency is that in this case the classical `doubling of variables' technique (see \cite{kruvzkov1970first}) involves both, terms with $k(x)$ and terms with $k(y)$.

In the case of a spatially independent flux the seminal paper by Kuznetsov \cite{kuznetsov76} shows that monotone schemes converge towards the entropy solution of~\eqref{conservation law} without spatial dependency at a rate of $\mathcal{O}(\sqrt{\Dx})$ in $\mathrm{L}^1$. This rate was proved for initial data in $(\mathrm{L}^1\cup\mathrm{BV})(\R)$, and in this generality the rate $\mathcal{O}(\sqrt{\Dx})$ is in fact optimal, as was shown by \c{S}abac in \cite{sabac}. There are certain classes of initial data for which higher orders of convergence for monotone schemes have been shown, e.g. Teng and Zhang \cite{zhang_teng} showed a convergence rate of $\mathcal{O}(\Dx)$ for the case of piecewise constant initial data. See also \cite{Ruf2019} for a more comprehensive overview of convergence rate results for~\eqref{conservation law} without spatial dependency.

An alternative approach to convergence rates in the case where the flux only depends on $u$ was initiated by Nessyahu, Tadmor, and Tassa \cite{nessyconv92,nessyconv}. The authors used the Wasserstein distance instead of the $\mathrm{L}^1$ norm and were able to show that a large class of monotone schemes converge at a rate of $\mathcal{O}(\Dx)$ in the Wasserstein distance for $\mathrm{Lip}^+$-bounded, compactly supported initial data. This rate was recently proved to be optimal by Ruf, Sande, and Solem \cite{Ruf2019}.

Since the proof of our main theorem makes use of convergence rates for conservation laws on bounded domains, it is worth mentioning that Ohlberger and Vovelle claimed a convergence rate of $\mathcal{O}(\Dx^{\unitfrac{1}{3}})$ for conservation laws with initial and boundary data in one dimension \cite[p. 135]{ohlberger2006error}. In our specific case of a strictly monotone flux however, we are able to prove a better rate of $\mathcal{O}(\sqrt{\Dx})$.

\subsection{Outline of the paper}

We have organized the paper in the following way. In Section~\ref{sec: Preliminaries}, we will define entropy solutions of~\eqref{conservation law} and show that
-- when restricted to a subdomain between two neighboring discontinuities of $k$ -- they are entropy solutions of a certain initial boundary value problem with spatially independent flux.
Here the respective boundary datum is given through the Rankine--Hugoniot condition across a discontinuity of $k$. In Section~\ref{sec: The numerical scheme}, we describe our finite volume scheme and show that we can establish a convergence rate of our numerical method for~\eqref{conservation law} by proving a convergence rate for each of those initial-boundary value problems. In Section~\ref{sec: Convergence rates for fluxes with one discontinuity}, we start by considering just one discontinuity of $k$, i.e., Equation~\eqref{two flux case}, and proving a convergence rate on $\R^-$ and $\R^+$ separately. Section~\ref{sec: statement and proof of the main theorem} contains the statement and proof of the main result where we use the translation invariance of conservation laws and the results of the previous section in our main proof. Section~\ref{sec: Numerical experiments} describes numerical experiments that illustrate our convergence rate result as well as the class of fluxes that is covered by our theory. In Section~\ref{sec: Conclusion}, we summarize the findings of this paper and provide an outlook.
Lastly, in Appendix~\ref{app: Convergence rates for general IBVPs}, we show that -- with minimal changes -- our results can be applied to general initial-boundary value problems where the prescribed boundary datum is arbitrary.

\section{Preliminaries}\label{sec: Preliminaries}

Throughout this paper, we will assume that the initial datum $u_0$ is integrable, bounded, and of finite total variation, i.e., $u_0 \in(\mathrm{L}^1\cap\BV)(\R)$, and that $f$ is strictly monotone in $u$, i.e., $f_u\geq \alpha>0$. Further, we will denote the discontinuities of $k$ as $\xi_1,\ldots,\xi_N$ and the interval between two adjacent discontinuities as $D_i = (\xi_i,\xi_{i+1})$, $i=0,\ldots, N$. Here, we used the notation $\xi_0=-\infty$ and $\xi_{N+1}=\infty$. Then we can write
\begin{equation*}
  f(k(x),\cdot) =: f^{(i)}(\cdot)\qquad \text{for }x\in D_i.
\end{equation*}
We will consider entropy solutions of Equation \eqref{conservation law} in the following sense.
\begin{definition}[Entropy solution]\label{def: entropy solution}
  We say $u\in\mathcal{C}([0,T];\mathrm{L}^1(\R))\cap\mathrm{L}^\infty((0,T)\times\R)$ is an entropy solution of Equation~\eqref{conservation law} if for all $c\in\R$
  \begin{multline*}
    \sum_{i=0}^N \bigg( \int_0^T \int_{D_i} (|u-c_i|\varphi_t + \sign(u-c_i)(f^{(i)}(u)-f^{(i)}(c_i))\varphi_x) \diff x\diff t \\
    -\int_{D_i}|u(x,T)-c_i|\varphi(x,T)\diff x +\int_{D_i}|u_0(x)-c_i|\varphi(x,0))\diff x\\
    - \int_0^T \sign(u(\xi_{i+1}-,t)-c_i)(f^{(i)}(u(\xi_{i+1}-,t))-f^{(i)}(c_i))\varphi(\xi_{i+1},t)\diff t \\
    + \int_0^T \sign(u(\xi_i +,t)-c_i)(f^{(i)}(u(\xi_i +,t))-f^{(i)}(c_i))\varphi(\xi_i,t)\diff t \bigg) \geq 0.
  \end{multline*}
  for all nonnegative $\varphi\in\mathcal{C}^\infty(\R\times[0,T])$. Here, the $c_i$ are given by $c_0 :=c$ and
  \begin{equation}
    c_{i+1}=(f^{(i+1)})^{-1}(f^{(i)}(c_i)) \qquad \text{for } i=1,\ldots,N.
    \label{definition c}
  \end{equation}
\end{definition}
\begin{remark}
  Note that, due to the monotonicity of the fluxes $f^{(i)}$, the inverse of $f^{(i)}$ used in \eqref{definition c} and throughout this paper exists.
\end{remark}
\begin{remark}
  Note that existence and uniqueness of entropy solutions of Equation~\eqref{conservation law} are guaranteed by the theory developed by Baiti and Jensen in \cite{BAITI1997161} using adapted entropies in the sense above (cf. also \cite{audusse2005uniqueness} where adapted entropies are used as well). In particular, the traces in Defintion~\ref{def: entropy solution} are well defined (cf. \cite[Remark 2.3]{andreianov2011theory})
\end{remark}

\begin{remark}
  Like for conservation laws without (discontinuous) spatial dependency of the flux, a Rankine--Hugoniot-type argument shows that weak solutions of~\eqref{conservation law} necessarily satisfy the Rankine--Hugoniot condition across all discontinuities~$\xi_i$, i.e.,
  \begin{equation}
    f^{(i-1)}(u(\xi_i-,t)) = f^{(i)}(u(\xi_i+,t)).
    \label{continuous Rankine--Hugoniot condition}
  \end{equation}
\end{remark}

The following observation is at the heart of the proof of the main result. The entropy solution $u$ of~\eqref{conservation law} can be decomposed as $u=\sum_{i=0}^N u^{(i)}$ where $u^{(i)}:=u \mathbbm{1}_{D_i\times [0,T]}$ such that $u^{(0)}$ solves
\begin{gather}
\begin{aligned}
  u^{(0)}_t + f^{(0)}(u^{(0)})_x = 0,& &&(x,t)\in D_0\times(0,T),\\
  u^{(0)}(x,0) = u_0(x),& &&x\in D_0
\end{aligned}
\label{conservation law on D0}
\end{gather}
and $u^{(i)}$ solves
\begin{gather}
\begin{aligned}
  u^{(i)}_t + f^{(i)}(u^{(i)})_x = 0,& &&(x,t)\in D_i\times(0,T),\\
  u^{(i)}(x,0) = u_0(x),& &&x\in D_i,\\
  u^{(i)}(\xi_i +,t) = (f^{(i)})^{-1}\left(  f^{(i-1)}(u^{(i-1)}(\xi_i -,t)) \right),& &&t\in(0,T)
\end{aligned}
\label{conservation law on Di}
\end{gather}
for $i=1,\ldots,N$ (cf. Definitions~\ref{def: entropy solution on R-} and~\ref{def: entropy solution on R+} below).
Note that the boundary condition on the domain $D_i$, $i=1,\ldots,N$, given by the last line of \eqref{conservation law on Di} reflects the Rankine--Hugoniot condition \eqref{continuous Rankine--Hugoniot condition}.

Conversely, if $u^{(0)}$ is the entropy solution of \eqref{conservation law on D0} on $D_0$ and $u^{(i)}$ is the entropy solution of \eqref{conservation law on Di} on $D_i$ for $i=1,\ldots,N$, then the composite function $u:=\sum_{i=0}^N u^{(i)}$ is the entropy solution of \eqref{conservation law} in the sense of Definition~\ref{def: entropy solution}. This can be seen by adding the entropy inequalities of $u^{(i)}$ and choosing the respective constant in each entropy inequality in accordance with \eqref{definition c}.

In the remainder of the paper, we will construct a numerical scheme which satisfies the Rankine--Hugoniot condition \eqref{continuous Rankine--Hugoniot condition} (or, equivalently, the last line of \eqref{conservation law on Di}) across the discontinuities of $k$ on the discrete level. This scheme, when restricted to the subdomain $D_i$, will converge towards the entropy solution on $D_i$. The discrete Rankine--Hugoniot condition will then allow us to break down the problem of finding a convergence rate on the whole real line to finding convergence rates on each of the subdomains $D_i$.

\section{The numerical scheme}\label{sec: The numerical scheme}
We discretize the domain $\R\times[0,T]$ using the spatial and temporal grid discretization parameters $\Dx$ and $\Dt$. The resulting grid cells then are $\cell_j = (x_\jmhf,x_\jphf)$ and $\cell^n=(t^n,t^{n+1})$ for points $x_\jphf$, such that $x_\jphf-x_\jmhf=\Dx$, $j\in\Z$,  and $t^n=n\Dt$ for $n=0,\ldots,M+1$. Note that $T=(M+1)\Dt$. Further we write $\cell_j^n$ to denote the rectangle $\cell_j\times\cell^n$.

In the following we will assume that the grid is aligned in such a way that all discontinuities of $k$ lie on cell interfaces, i.e., $\xi_i = x_{P_i-\hf}$ for some integers $P_i$, $i=1,\ldots, N$. In general this can easily be achieved by considering a globally non-uniform grid that is uniform on each $D_i$ and then taking $\Dx = \max_{i=0,\ldots, N} \Dx_i$ where $\Dx_i$ is the grid discretization parameter in $D_i$. For simplicity however, here we will assume that the grid is uniform on the whole real line.


Further, we will consider two-point numerical fluxes $F(u,v)$ that have the upwind property such that if $f'\geq 0$ then $F(u,v)=f(v)$. Such fluxes include the upwind flux, the Godunov flux, and the Engquist--Osher flux.
Thus, the numerical scheme we will analyze is the following:
\begin{gather}
\begin{aligned}
  u_j^{n+1} = u_j^n - \lambda \left( f^{(i)}(u_j^n) - f^{(i)}(u_{j-1}^n) \right),& &&n\geq 0,~P_i<j<P_{i+1},~0\leq i\leq N\\
  u_j^0 = \frac{1}{\Dx} \int_{\cell_j} u_0(x)\diff x,& &&j\in\Z,\\
  u_{P_i}^{n+1} = (f^{(i)})^{-1}\left(f^{(i-1)}(u_{P_i -1}^{n+1})\right),& &&n\geq 0,~0<i\leq N
\end{aligned}
\label{numerical scheme on R}
\end{gather}
where $\lambda=\Dt/\Dx$.
We assume that the grid discretization parameters satisfy the CFL condition
\begin{equation}
  \max_i \max_u (f^{(i)})'(u) \lambda  \leq 1.
  \label{CFL condition}
\end{equation}

Note that the last line of \eqref{numerical scheme on R} represents a discrete version of the Rankine--Hugoniot condition \eqref{continuous Rankine--Hugoniot condition}. Here, we use the ghost cells $\cell_{P_i}$, $i=1,\ldots,N$ to explicitely enforce the Rankine--Hugoniot condition on the discrete level. While this makes the numerical scheme \eqref{numerical scheme on R} non-conservative, the convergence result in this paper, coupled with the fact that the limit is conservative, shows that the contribution of the non-conservative part of the scheme vanishes in the limit.

To get a convergence rate of the numerical scheme~\eqref{numerical scheme on R} we decompose the entropy solution $u$ as $u=\sum_{i=0}^N u^{(i)}$ where $u^{(i)}$, $i=0,\ldots,N$, are the respective entropy solutions on $D_i$ and the numerical solution $u_\Dt$ as $\sum_{i=0}^N u^{(i)}_\Dt$ where
\begin{equation*}
  u^{(i)}_\Dt(x,t)=\begin{cases}
    u_j^n &\text{if } (x,t)\in\cell_j^n\subset D_i\times\cell^n,\\
    0 &\text{otherwise}
  \end{cases}
\end{equation*}
Then we have
\begin{equation*}
 \|u(T) - u_{\Dt}(T)\|_{\mathrm{L}^1(\R)} = \sum_{i=0}^N \|u^{(i)}(T) - u^{(i)}_\Dt(T)\|_{\mathrm{L}^1(D_i)}
\end{equation*}
and the problem of finding a convergence rate for $u_\Dt$ can be broken down to finding convergence rates for $u^{(i)}_\Dt$ on each of the subdomains $D_i$.
In the following sections, we will show that
\begin{equation*}
  \|u^{(i)}(T) - u^{(i)}_\Dt(T)\|_{\mathrm{L}^1(D_i)} \leq C \sqrt{\Dx}.
\end{equation*}
Note that convergence of the numerical scheme~\eqref{numerical scheme on R} towards the entropy solution of \eqref{conservation law} follows from our convergence rate estimate. At this point we want to point out that instead of assuming $f_u>0$ our proof can readily be adapted for the case $f_u<0$.

\section{Convergence rates for fluxes with one discontinuity}\label{sec: Convergence rates for fluxes with one discontinuity}
We will first consider the case where $k$ has just two constant values separated by a discontinuity $\xi_1$ and for ease of notation we will assume that $\xi_1=0$. Further, we will denote the flux left of $\xi_1$ as $g$ and right of $\xi_1$ as $f$.
In order to get a convergence rate for problem~\eqref{conservation law} we will derive convergence rates on $D_0 = \R^-$, on $D_1=\R^+$, and on $(0,L)$ for $L>0$.

\subsection{Convergence rate estimates on $\R^-$}\label{subsec: Convergence rates on R-}
As a first step we consider the initial value problem
\begin{gather}
\begin{aligned}
  u_t + g(u)_x = 0,& &&(x,t)\in \R^-\times(0,T),\\
  u(x,0) = u_0(x),& &&x\in \R^-
\end{aligned}
\label{conservation law on R-}
\end{gather}
on $\R^-$ with the flux $g$ being strictly monotone and consider entropy solutions in the following sense.
\begin{definition}[Entropy solution on $\R^-$]\label{def: entropy solution on R-}
  We say $u\in\mathcal{C}([0,T];\mathrm{L}^1(\R^-))\cap\\\mathrm{L}^\infty((0,T)\times\R^-)$ is an entropy solution of Equation~\eqref{conservation law on R-} if for all $c\in\R$,
  \begin{multline*}
    \int_0^T \int_{\R^-} (|u-c|\varphi_t + |g(u)-g(c)|\varphi_x) \diff x\diff t -\int_{\R^-}|u(x,T)-c|\varphi(x,T)\diff x \\
    +\int_{\R^-}|u_0(x)-c|\varphi(x,0))\diff x - \int_0^T |g(u(0-,t))-g(c)|\varphi(0,t)\diff t \geq 0
  \end{multline*}
  for all nonnegative $\varphi\in\mathcal{C}^\infty((-\infty,0]\times[0,T])$.
\end{definition}
Note that here $u(0-,t)$ denotes the limit of $u(x,t)$ as $x\to0$ from the left.



As before, we will write $\cell_j = (x_\jmhf,x_\jphf)$, $j\in\Z$, where now $x_{-\hf}:=0$.
%
Our numerical scheme then reads
\begin{gather}
\begin{aligned}
  u_j^{n+1} = u_j^n - \lambda \left( g(u_j^n) - g(u_{j-1}^n) \right),& &&j < 0,~n\geq 0,\\
  u_j^0 = \frac{1}{\Dx} \int_{\cell_j} u_0(x)\diff x,& &&j< 0
\end{aligned}
\label{numerical scheme on R-}
\end{gather}
where $\lambda=\Dt/\Dx$ satisfies the CFL condition~\eqref{CFL condition}.







We note that the numerical scheme satisfies a discrete entropy inequality away from the spatial boundary
\begin{equation}
  \Dplust \eta_j^n + \Dminus q_j^n \leq 0,\qquad n\geq 1,~j< 0
  \label{discrete entropy inequality on R-}
\end{equation}
which can be seen by adopting the classical Crandall--Majda arguments in~\cite[Prop. 4.1]{crandall1980monotone} for $j< 0$.
Here, $\eta_j^n=\eta(u_j^n,c)=|u_j^n-c|$, $q_j^n = q(u_j^n,c)=\sign(u_j^n -c)(g(u_j^n)-g(c))=|g(u_j^n)-g(c)|$ and
\begin{equation*}
  \Dplust a^n = \frac{a^{n+1}-a^n}{\Dt}\qquad\text{and}\qquad \Dminus a_j = \frac{a_j-a_{j-1}}{\Dx}
\end{equation*}
denote standard difference operators.

In order to derive convergence rates we will develop a Kuznetsov-type lemma in the following.
For any function $u\in\mathcal{C}([0,T];\rm{L}^1(\R^-))$ we define
\begin{align*}
  L(u,c,\varphi) =& \int_0^T\int_{\R^-} \left( |u-c| \varphi_t +  q(u,c)\varphi_x \right)\diff x\diff t -\int_{\R^-} |u(x,T)-c|\varphi(x,T)\diff x\\
  &+ \int_{\R^-} |u_0(x)-c|\varphi(x,0)\diff x - \int_0^T q(u(0-,t),c)\varphi(0,t)\diff t
\end{align*}
where $q(u,c)=|g(u)-g(c)|$ is the Kru\v{z}kov entropy flux. Note that if $u$ is an entropy solution of~\eqref{conservation law on R-} then $L(u,c,\varphi)\geq 0$ for all $c\in\R$ and test functions $\varphi\geq 0$. We now take $c=v(y,s)$ and the test function
\begin{equation*}
  \varphi(x,t,y,s) = \omega_{\varepsilon}(x-y)\omega_{\varepsilon_0}(t-s)
\end{equation*}
where $\omega_{\varepsilon},\omega_{\varepsilon_0}$ are standard symmetric mollifiers for $\varepsilon,\varepsilon_0>0$. Note that $\varphi_t = -\varphi_s$, $\varphi_x=-\varphi_y$ and
\begin{equation}
  \varphi(x,t,y,s) = \varphi(y,t,x,s) = \varphi(y,s,x,t) = \varphi(x,s,y,t)
  \label{properties of varphi}
\end{equation}
as well as
\begin{align}
\begin{aligned}
  \int_{\R}\omega_{\varepsilon}(x-y)\diff y &\leq 1,\\
  \int_0^T \omega_{\varepsilon_0}(t-s)\diff s  &\leq 1,
\end{aligned}
&&
\begin{aligned}
  \int_{\R} |\omega_{\varepsilon}'(x-y)|\diff y &\leq \frac{C}{\varepsilon},\\
  \int_0^T |\omega_{\varepsilon_0}'(t-s)|\diff s &\leq \frac{C}{\varepsilon_0}
\end{aligned}
\label{properties of the mollifier}
\end{align}
for all $x\in\R$, $t\in [0,T]$.
Let now
\begin{equation}
  \Lambda_{\varepsilon,\varepsilon_0}(u,v) = \int_0^T\int_{\R^-} L(u,v(y,s),\varphi(\cdot,\cdot,y,s))\diff y\diff s.
  \label{Definition of Lambda}
\end{equation}
For functions $w\in\mathcal{C}([0,T];\mathrm{L}^1(\R^-))$, we further define the moduli of continuity
\begin{align*}
  \nu_t(w,\varepsilon_0) &= \sup_{|\sigma|\leq \varepsilon_0} \|w(\cdot,t+\sigma)-w(\cdot,t)\|_{\mathrm{L}^1(\R^-)},\\
  \mu(w(\cdot,t),\varepsilon) &= \sup_{|z|\leq \varepsilon} \|w(\cdot+z,t)-w(\cdot,t)\|_{\mathrm{L}^1(\R^-)}.
\end{align*}

\begin{lemma}[Kuznetsov-type lemma]\label{Lemma: Kuznetsov}
  Let $u$ be the entropy solution of~\eqref{conservation law on R-}. Then, for any function $v:[0,T]\to(\Lone\cap\BV)(\R^-)$ such that the one-sided limits $v(t\pm)$ exist in $\Lone$, we have
  \begin{multline*}
    \|u(\cdot,T) - v(\cdot,T)\|_{\mathrm{L}^1(\R^-)} \\+ \int_0^T\int_{\R^-}\int_0^T\left( q(u(0-,t),v(y,s)) + q(v(0-,t),u(y,s)) \right)\varphi(0,t,y,s)\diff t\diff y\diff s\\
    \shoveleft{\leq \|u_0 - v(\cdot,0)\|_{\mathrm{L}^1(\R^-)} - \Lambda_{\varepsilon,\varepsilon_0}(v,u)}\\
    + C\bigg( \varepsilon +\varepsilon_0 + \nu_T(v,\varepsilon_0) + \nu_0(v,\varepsilon_0) + \mu(v(\cdot,T),\varepsilon)) + \mu(v(\cdot,0),\varepsilon)) \bigg)
  \end{multline*}
  for some constant $C$ independent of $\varepsilon$ and $\varepsilon_0$.
\end{lemma}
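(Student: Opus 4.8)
The plan is to run Kuznetsov's doubling-of-variables argument, adapted so as to keep careful track of the boundary term at $x=0$. The entire estimate will come from combining the two functionals $\Lambda_{\varepsilon,\varepsilon_0}(u,v)$ and $\Lambda_{\varepsilon,\varepsilon_0}(v,u)$ and from exploiting that only the first of them is known to be nonnegative. Indeed, the single structural input is that, since $u$ is the entropy solution of~\eqref{conservation law on R-}, for every fixed $(y,s)$ the function $\varphi(\cdot,\cdot,y,s)\geq0$ is an admissible test function and $v(y,s)$ an admissible constant, so $L(u,v(y,s),\varphi(\cdot,\cdot,y,s))\geq0$; integrating in $(y,s)$ yields $\Lambda_{\varepsilon,\varepsilon_0}(u,v)\geq0$. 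By contrast $v$ is arbitrary, so $\Lambda_{\varepsilon,\varepsilon_0}(v,u)$ carries no sign and is precisely the term that will be left on the right-hand side.

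Next I would split each functional into the four groups of terms appearing in $L$: an interior space--time double integral, a final-time term at $t=T$, an initial-time term at $t=0$, and the boundary term at $x=0$. The heart of the argument is that the two interior pieces cancel \emph{exactly}. This is a pure relabeling: renaming $(x,t)\leftrightarrow(y,s)$ in the interior piece of $\Lambda_{\varepsilon,\varepsilon_0}(v,u)$ and using the symmetry $\varphi(x,t,y,s)=\varphi(y,s,x,t)$ together with $\varphi_t=-\varphi_s$, $\varphi_x=-\varphi_y$ from~\eqref{properties of varphi} and the symmetry $q(a,b)=q(b,a)$ of the Kru\v{z}kov entropy flux. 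Crucially, no integration by parts enters, so the half-line domain produces no spurious boundary contribution at this stage; the only boundary terms are the explicit ones already built into $L$. After the cancellation, $\Lambda_{\varepsilon,\varepsilon_0}(u,v)+\Lambda_{\varepsilon,\varepsilon_0}(v,u)$ reduces to the sum of the two initial-time terms minus the two final-time terms minus the two boundary terms.

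It then remains to turn the final- and initial-time pieces into the $\mathrm{L}^1$ norms of the statement. For the final-time pieces I would, in the contribution from $\Lambda_{\varepsilon,\varepsilon_0}(u,v)$, first replace $v(y,s)$ by $v(y,T)$ (error $\nu_T(v,\varepsilon_0)$) and then $v(y,T)$ by $v(x,T)$ (error $\mu(v(\cdot,T),\varepsilon)$), and symmetrically replace $u(y,s)$ by $u(x,T)$ in the contribution from $\Lambda_{\varepsilon,\varepsilon_0}(v,u)$; since $u$ is the entropy solution, its own spatial and temporal moduli are bounded by $C\varepsilon$ and $C\varepsilon_0$ (from $\TV(u_0)$ and the time-Lipschitz estimate) and are absorbed into the constant. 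Using $\int_0^T\omega_{\varepsilon_0}(T-s)\diff s=\tfrac12$ and $\int_{\R^-}\omega_\varepsilon(x-y)\diff y=1$ up to an $O(\varepsilon)$ boundary layer near $x=0$, each of the two final-time pieces contributes $\tfrac12\|u(\cdot,T)-v(\cdot,T)\|_{\mathrm{L}^1(\R^-)}$, so together they reconstruct the full norm with unit coefficient. The initial-time pieces are handled identically and yield $\|u_0-v(\cdot,0)\|_{\mathrm{L}^1(\R^-)}$ up to errors of the same type. Collecting everything, keeping the (nonnegative) boundary terms on the left, and discarding $\Lambda_{\varepsilon,\varepsilon_0}(u,v)\geq0$ produces exactly the claimed inequality, with $B(u,v)+B(v,u)$ supplying the two boundary integrands $q(u(0-,t),v(y,s))$ and $q(v(0-,t),u(y,s))$.

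The main obstacle I anticipate is the bookkeeping at the temporal endpoints: the one-sided time integration against $\omega_{\varepsilon_0}$ produces the factor $\tfrac12$ rather than $1$, and one must verify that the contributions from $\Lambda_{\varepsilon,\varepsilon_0}(u,v)$ and $\Lambda_{\varepsilon,\varepsilon_0}(v,u)$ combine to the correct unit coefficient while the $O(\varepsilon)$ truncation of the spatial mollifier against $\R^-$ stays harmless. A secondary point to check is that the boundary term genuinely emerges with the sign that lets it remain on the left-hand side, and that every replacement performed on $u$ is legitimately controlled by $C\varepsilon$ and $C\varepsilon_0$; this is where the assumed existence of the traces $v(0-,t)$ and $v(t\pm)$ is used to make $\Lambda_{\varepsilon,\varepsilon_0}(v,u)$ and its boundary term well defined.
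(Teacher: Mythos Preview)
Your proposal is correct and follows essentially the same route as the paper: the paper derives the identity $\Lambda_{\varepsilon,\varepsilon_0}(u,v)=-\Lambda_{\varepsilon,\varepsilon_0}(v,u)-\mathbf{A}+\mathbf{B}-\mathbf{C}$ from the symmetry relations of $\varphi$, invokes $\Lambda_{\varepsilon,\varepsilon_0}(u,v)\geq 0$, and then bounds the final- and initial-time terms $\mathbf{A}$, $\mathbf{B}$ via the moduli of continuity exactly as you describe. The only difference is presentational: the paper defers the details of the endpoint estimates (your factor-$\tfrac12$ and $O(\varepsilon)$ boundary-layer remarks) to standard references rather than spelling them out.
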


\begin{proof}
  Using that $\varphi_t = -\varphi_s$, $\varphi_x=-\varphi_y$ and the symmetry relations~\eqref{properties of varphi} we get
  \begin{align*}
    &\Lambda_{\varepsilon,\varepsilon_0}(u,v)\\
    &= -\Lambda_{\varepsilon,\varepsilon_0}(v,u)\\
     &\mathrel{\phantom{=}}- \underbrace{\int_0^T\int_{\R^-}\int_{\R^-}\left( |u(x,T)-v(y,s)|+|v(x,T)-u(y,s)| \right)\varphi(x,T,y,s)\diff x\diff y\diff s}_{=:\mathbf{A}}\\
    &\mathrel{\phantom{=}}+ \underbrace{\int_0^T\int_{\R^-}\int_{\R^-}\left( |u_0(x,t)-v(y,s)|+|v(x,0)-u(y,s)| \right)\varphi(x,0,y,s)\diff x\diff y\diff s}_{=:\mathbf{B}}\\
    &\mathrel{\phantom{=}}- \underbrace{\int_0^T\int_{\R^-}\int_0^T\left( q(u(0-,t),v(y,s)) + q(v(0-,t),u(y,s)) \right)\varphi(0,t,y,s)\diff t\diff y\diff s}_{=:\mathbf{C}}.
  \end{align*}
  Since $u$ is an entropy solution we find
  \begin{equation*}
    0\leq \Lambda_{\varepsilon,\varepsilon_0}(u,v) = -\Lambda_{\varepsilon,\varepsilon_0}(v,u) -\mathbf{A}+\mathbf{B}-\mathbf{C}
  \end{equation*}
  and thus
  \begin{equation*}
    \mathbf{A} +\mathbf{C} \leq -\Lambda_{\varepsilon,\varepsilon_0}(v,u) +\mathbf{B}.
  \end{equation*}
  The terms $\mathbf{A}$ and $\mathbf{B}$ also appear in the case of an unbounded spatial domain and can be estimated by
  \begin{align*}
    \mathbf{A} \geq& \|u(\cdot,T) - v(\cdot,T)\|_{\mathrm{L}^1(\R^-)}\\
    &- \frac{1}{2}\left( \nu_T(u,\varepsilon_0) + \mu(u(\cdot,T),\varepsilon) + \nu_T(v,\varepsilon_0) + \mu(v(\cdot,T),\varepsilon) \right)
    \intertext{and}
    \mathbf{B} \leq& \|u_0 - v(\cdot,0)\|_{\mathrm{L}^1(\R^-)} + \frac{1}{2}\left( \nu_0(u,\varepsilon_0) + \mu(u_0,\varepsilon) + \nu_0(v,\varepsilon_0) + \mu(v(\cdot,0),\varepsilon) \right),
  \end{align*}
  see \cite{coclite2017convergent} or \cite{holden2015front} for details.
  Lastly, due to the Lipschitz continuity in time and the TVD property (see \cite[Thm. 2.15]{holden2015front} and \cite[Lem. A.1]{holden2015front}) the entropy solution of~\eqref{conservation law on R-} satisfies
  \begin{align*}
    \nu_0(u,\varepsilon_0),~\nu_T(u,\varepsilon_0) &\leq C \TV(u_0) \varepsilon_0\\
    \text{and}\qquad \mu(u_0,\varepsilon),~\mu(u(\cdot,T),\varepsilon) &\leq \TV(u_0)\varepsilon
  \end{align*}
  which completes the proof.
\end{proof}

In order to derive a convergence rate the next step is to estimate the term $\Lambda_{\varepsilon,\varepsilon_0}(u_\Dt,u)$.

\begin{lemma}\label{Lemma: Estimate on Lambda}
  The estimate
  \begin{equation*}
    -\Lambda_{\varepsilon,\varepsilon_0}(u_\Dt,u) \leq C\left( \Dx + \frac{\Dx}{\varepsilon} + \frac{\Dt}{\varepsilon_0} \right)
  \end{equation*}
  holds for some constant $C$ independent of $\Dx,\Dt,\varepsilon$, and $\varepsilon_0$.
\end{lemma}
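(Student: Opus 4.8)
The plan is to bound the functional $L(u_\Dt,c,\varphi)$ from below for each fixed $c=u(y,s)$ and each fixed $(y,s)$, and then integrate the resulting pointwise estimate over $(y,s)\in(0,T)\times\R^-$ as in the definition \eqref{Definition of Lambda}. The guiding principle is Kuznetsov's: the numerical solution obeys the discrete entropy inequality \eqref{discrete entropy inequality on R-}, which is a discrete analogue of the very inequality encoded in $L$, so $L(u_\Dt,c,\varphi)$ will be nonnegative up to consistency errors measuring the mismatch between the discrete and continuous objects. Controlling those errors and integrating in $(y,s)$ is what produces the claimed bound.

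First I would exploit that $u_\Dt$ equals $u_j^n$ on each space--time cell $\cell_j^n$, turning the two space--time integrals in $L(u_\Dt,c,\varphi)$ into sums of $\eta_j^n$ and $q_j^n$ against cell integrals of $\varphi_t$ and $\varphi_x$. Summation by parts in the time index $n$ (for the $\eta\varphi_t$ term) and in the spatial index $j$ (for the $q\varphi_x$ term) makes $\Dplust\eta_j^n$ and $\Dminus q_j^n$ appear, while generating boundary contributions: a terminal-time and an initial-time integral, which pair with the terms $-\int_{\R^-}|u_\Dt(\cdot,T)-c|\varphi(\cdot,T)$ and $+\int_{\R^-}|u_0-c|\varphi(\cdot,0)$ already in $L$; and a single interface flux $q(u_{-1}^n,c)\int_{\cell^n}\varphi(0,t)\diff t$ at the rightmost interface $x_{-\hf}=0$. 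The crucial point is that, since the scheme is upwind and $g$ is strictly increasing, the trace $u_\Dt(0-,t)=u_{-1}^n$ is exactly the value whose numerical flux crosses $x=0$, so this interface term cancels the boundary term $-\int_0^T q(u_\Dt(0-,t),c)\varphi(0,t)\diff t$ of $L$ identically. After these manipulations the leading contribution is $-\sum_{n\ge1}\sum_{j<0}\Dt\,\Dx\,(\Dplust\eta_j^n+\Dminus q_j^n)\,\overline\varphi_j^n$ with $\overline\varphi_j^n\ge0$ a cell-localised value of $\varphi$; by \eqref{discrete entropy inequality on R-} the parenthesis is nonpositive, so this term is $\ge 0$ and may be dropped in a lower bound.

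Next I would estimate the remaining consistency errors, which are precisely the differences incurred by replacing cell integrals of $\varphi$ (and of its derivatives) by the localised values $\overline\varphi_j^n$. Each such replacement costs one factor of mesh size times a derivative of $\varphi$: the spatial replacements are controlled by $\Dx\int_{\R}|\omega_\varepsilon'(x-y)|\diff y\le C\Dx/\varepsilon$ from \eqref{properties of the mollifier}, weighted by the spatial variation of $u_\Dt$, while the temporal replacements are controlled by $\Dt\int_0^T|\omega_{\varepsilon_0}'(t-s)|\diff s\le C\Dt/\varepsilon_0$, weighted by the temporal variation of $u_\Dt$. Using the TVD bound $\TV(u_\Dt(\cdot,t))\le\TV(u_0)$ and the discrete Lipschitz-in-time bound $\|u_\Dt(\cdot,t+\Dt)-u_\Dt(\cdot,t)\|_{\Lone(\R^-)}\le C\Dt\,\TV(u_0)$ -- both standard consequences of monotonicity of \eqref{numerical scheme on R-} under \eqref{CFL condition} -- and integrating in $(y,s)$ (so that the $c$-independent TV and Lipschitz constants survive and the mollifiers integrate to $O(1)$), these yield the $C\Dx/\varepsilon$ and $C\Dt/\varepsilon_0$ contributions. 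The stand-alone $C\Dx$ term collects the two zeroth-order errors that multiply $\varphi$ rather than a derivative of $\varphi$: the initial-data projection error $\|u_0-u_\Dt(\cdot,0)\|_{\Lone(\R^-)}\le C\Dx\,\TV(u_0)$ (note that $L$ retains the exact datum $u_0$ while summation by parts produces the cell averages $u_j^0$), together with the first-time-step contribution $n=0$, for which \eqref{discrete entropy inequality on R-} is not invoked.

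The step I expect to be the main obstacle is the treatment of the spatial boundary at $x=0$, which is the genuinely new feature compared with the classical whole-line Kuznetsov estimate. One must verify that the spatial summation by parts leaves exactly one interface term at $x_{-\hf}=0$ with no spurious term of the wrong sign, and that this term coincides with the trace term of $L$ so that the two cancel identically; this is where the upwind property together with $f_u,g'>0$ (making $x=0$ an outflow boundary whose relevant trace is the interior value $u_{-1}^n$) is indispensable. A secondary care point is the bookkeeping of index ranges: \eqref{discrete entropy inequality on R-} is available down to the boundary cell $j=-1$ but only for $n\ge1$, so the omitted first-time-step row must be charged correctly to the $C\Dx$ term rather than silently dropped.
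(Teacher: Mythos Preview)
Your proposal is correct and takes essentially the same approach as the paper. The paper's proof is in fact just a pointer to the standard whole-line Kuznetsov estimate (as in Holden--Risebro) with the sums truncated to $j\le -1$, together with the single remark that ``the boundary term in space cancels after integration by parts'' --- which is exactly your observation that the interface flux $q(u_{-1}^n,c)\int_{\cell^n}\varphi(0,t)\diff t$ produced by the spatial summation by parts coincides with and annihilates the trace term $-\int_0^T q(u_\Dt(0-,t),c)\varphi(0,t)\diff t$ in $L$ because the upwind scheme makes $u_\Dt(0-,t)=u_{-1}^n$. The detailed bookkeeping you outline (the $\mathbf{E_j^n}$-type consistency errors yielding $\Dx/\varepsilon$ and $\Dt/\varepsilon_0$ via the TVD and $\Lone$-Lipschitz bounds) is precisely what the paper spells out in the companion Lemma on $\R^+$, to which it refers for details.
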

\begin{proof}
	The proof of Lemma~\ref{Lemma: Estimate on Lambda} for conservation laws on the real line can be found e.g. in \cite{holden2015front}. Here, we only need to replace any sum of the form $\sum_{j=-\infty}^\infty$ by $\sum_{-\infty}^{-1}$ and note that the boundary term in space cancels after integration by parts. Confer also the proof of Lemma~\ref{Lemma: Estimate on Lambda 2} in Section~\ref{subsec: convergence rates on R+} for details.
\end{proof}

\begin{theorem}[Convergence rate on $\R^-$]\label{thm: convergence rate on R-}
  Let $u$ be the entropy solution of the initial-boundary value problem~\eqref{conservation law on R-} and $u_\Dt$ the numerical approximation given by~\eqref{numerical scheme on R-} where we take $\lambda$ constant. Then we have the following convergence rate estimate:
  \begin{equation*}
    \|u(\cdot,T) - u_\Dt(\cdot,T)\|_{\mathrm{L}^1(\R^-)} \leq C \sqrt{\Dx}
  \end{equation*}
  for some constant $C$ independent of $\Dx$.
\end{theorem}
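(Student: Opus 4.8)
The plan is to combine the two preceding lemmas and then optimize over the mollification parameters $\eps,\eps_0$, exactly as in the classical Kuznetsov argument, with the boundary term being the only genuinely new ingredient. First I would apply Lemma~\ref{Lemma: Kuznetsov} with the choice $v=u_\Dt$. Before doing so one must check that the numerical solution meets the hypotheses of that lemma: since $u_\Dt$ is piecewise constant in $t$, the one-sided limits $u_\Dt(t\pm)$ exist in $\mathrm{L}^1$ trivially, while the TVD property of the monotone scheme~\eqref{numerical scheme on R-} (obtained from the Crandall--Majda arguments already invoked for~\eqref{discrete entropy inequality on R-}) guarantees that $u_\Dt(\cdot,t)\in(\mathrm{L}^1\cap\BV)(\R^-)$ uniformly in $\Dx$.

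The key structural observation concerns the boundary term. On the left-hand side of Lemma~\ref{Lemma: Kuznetsov} the triple integral
\[
\int_0^T\int_{\R^-}\int_0^T\bigl(q(u(0-,t),u_\Dt(y,s))+q(u_\Dt(0-,t),u(y,s))\bigr)\varphi(0,t,y,s)\diff t\diff y\diff s
\]
is nonnegative, because $q=|g(\cdot)-g(\cdot)|\geq 0$ and the mollified test function $\varphi$ is nonnegative. Hence it can simply be discarded, leaving a bound for $\|u(\cdot,T)-u_\Dt(\cdot,T)\|_{\mathrm{L}^1(\R^-)}$ alone. This is precisely where the strict monotonicity $g'>0$ pays off: it makes $x=0$ an outflow boundary, so no boundary datum needs to be prescribed in the scheme and the corresponding boundary contribution carries a favorable sign rather than having to be controlled.

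It then remains to estimate the right-hand side term by term. The initial error $\|u_0-u_\Dt(\cdot,0)\|_{\mathrm{L}^1(\R^-)}$ is $\mathcal{O}(\Dx)$, since $u_\Dt(\cdot,0)$ is the cell average of $u_0\in\BV$; the term $-\Lambda_{\eps,\eps_0}(u_\Dt,u)$ is controlled by Lemma~\ref{Lemma: Estimate on Lambda}; and the moduli of continuity of $u_\Dt$ are estimated via the a priori bounds for the monotone scheme, giving $\nu_T(u_\Dt,\eps_0),\nu_0(u_\Dt,\eps_0)\leq C(\eps_0+\Dt)$ (Lipschitz continuity in time) and $\mu(u_\Dt(\cdot,T),\eps),\mu(u_\Dt(\cdot,0),\eps)\leq C\eps$ (uniform total variation bound). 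Collecting everything yields
\[
\|u(\cdot,T)-u_\Dt(\cdot,T)\|_{\mathrm{L}^1(\R^-)}\leq C\Bigl(\Dx+\frac{\Dx}{\eps}+\frac{\Dt}{\eps_0}+\eps+\eps_0+\Dt\Bigr).
\]

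Finally I would optimize the free parameters. Since $\lambda=\Dt/\Dx$ is held constant we have $\Dt=\lambda\Dx$, so choosing $\eps=\sqrt{\Dx}$ and $\eps_0=\sqrt{\Dt}$ balances $\Dx/\eps$ against $\eps$ and $\Dt/\eps_0$ against $\eps_0$; every resulting term is $\mathcal{O}(\sqrt{\Dx})$ and the remaining $\Dx,\Dt$ contributions are of higher order, which gives the claimed rate. I expect the only delicate points to be verifying that the standard monotone-scheme a priori estimates (maximum principle, TVD, $\mathrm{L}^1$ Lipschitz continuity in time) genuinely survive up to the outflow boundary $x=0$ — which they do, because the upwind update for $j<0$ never references a cell to the right of the boundary — and keeping the bookkeeping of the boundary term consistent between Lemma~\ref{Lemma: Kuznetsov} and Lemma~\ref{Lemma: Estimate on Lambda}, where the spatial boundary contribution cancels after integration by parts; the rest is the routine Kuznetsov optimization.
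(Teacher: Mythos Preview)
Your proposal is correct and follows essentially the same approach as the paper: apply the Kuznetsov-type Lemma~\ref{Lemma: Kuznetsov} with $v=u_\Dt$, bound $-\Lambda_{\eps,\eps_0}(u_\Dt,u)$ via Lemma~\ref{Lemma: Estimate on Lambda}, use the standard TVD and time-Lipschitz estimates for the moduli of continuity, optimize $\eps,\eps_0\sim\sqrt{\Dx}$, and discard the nonnegative boundary integral. The only cosmetic difference is that the paper retains the boundary term through to the final inequality~\eqref{Almost convergence rate on R-} before dropping it, since that stronger estimate is reused later in the proof on~$\R^+$.
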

\begin{proof}
  The numerical solution $u_\Dt$ is Lipschitz continuous in time and TVD, and therefore satisfies
  \begin{align*}
    \nu_0(u_\Dt,\varepsilon_0),\,\nu_T(u_\Dt,\varepsilon_0) &\leq C\TV(u_0)(\varepsilon_0 +\Dt)\\
    \text{and}\qquad \mu(u_\Dt(\cdot,0),\varepsilon),~\mu(u_\Dt(\cdot,T),\varepsilon) &\leq \TV(u_0) \varepsilon.
  \end{align*}
  Thus, taking into consideration Lemmas~\ref{Lemma: Kuznetsov} and~\ref{Lemma: Estimate on Lambda}, we have
  \begin{multline*}
    \|u(\cdot,T) - u_\Dt(\cdot,T)\|_{\mathrm{L}^1(\R^-)}\\
    + \int_0^T\int_{\R^-}\int_0^T\left( q(u(0-,t),u_\Dt(y,s)) + q(u_\Dt(0-,t),u(y,s)) \right)\varphi(0,t,y,s)\diff t\diff y\diff s\\
    \leq \|u_0 - u_\Dt(\cdot,0)\|_{\mathrm{L}^1(\R^-)} + C\left(\Dx + \Dt +\varepsilon +\varepsilon_0 + \frac{\Dx}{\varepsilon} + \frac{\Dx}{\varepsilon_0} + \frac{\Dt}{\varepsilon_0} \right).
  \end{multline*}
  Because of our choice of discretizing the initial datum as $u_j^0 = \frac{1}{\Dx}\int_{\cell_j}u_0(x)\diff x$ we have $\|u_\Dt(\cdot,0)-u_0\|_{\mathrm{L}^1(\R^-)} \leq C\TV(u_0)\Dx$.
  Now, in order to get a convergence rate, we take $\lambda = \frac{\Dt}{\Dx}$ constant and minimize the right-hand side of the above estimate for $\varepsilon$ and $\varepsilon_0$. This yields $\varepsilon=\varepsilon_0=\sqrt{\Dx}$ and hence
  \begin{multline}
    \|u(\cdot,T) - u_\Dt(\cdot,T)\|_{\mathrm{L}^1(\R^-)} \\
    + \int_0^T\int_{\R^-}\int_0^T\left( q(u(0-,t),u_\Dt(y,s)) + q(u_\Dt(0-,t),u(y,s)) \right)\varphi(0,t,y,s)\diff t\diff y\diff s\\
    \leq C \sqrt{\Dx}.
    \label{Almost convergence rate on R-}
  \end{multline}
  Using the monotonicity of $g$ we find
  \begin{equation*}
    q(u,v) = |g(u)-g(v)|\geq 0
  \end{equation*}
  and thus the integral term in~\eqref{Almost convergence rate on R-} is nonnegative which concludes the proof.
\end{proof}

\subsection{Convergence rate estimates on $\R^+$}\label{subsec: convergence rates on R+}
As a second step we now consider the initial-boundary value problem
\begin{gather}
\begin{aligned}
  u_t + f(u)_x = 0,& &&(x,t)\in \R^+\times(0,T),\\
  u(x,0) = u_0(x),& &&x\in \R^+,\\
  u(0,t) = f^{-1}\left(g(u(0-,t)\right),& &&t\in (0,T)
\end{aligned}
\label{conservation law on R+}
\end{gather}
and the numerical scheme
\begin{gather}
\begin{aligned}
  u_j^{n+1} = u_j^n - \lambda \left( f(u_j^n) - f(u_{j-1}^n) \right),& &&j\geq 1,~n\geq 0\\
  u_j^0 = \frac{1}{\Dx} \int_{\cell_j} u_0(x)\diff x,& &&j\geq 0,\\
  u_0^n = f^{-1}\left(g(u_{-1}^n)\right),& &&n\geq 1
\end{aligned}
\label{numerical scheme on R+}
\end{gather}
where the boundary data is given in terms of $u(0,-,t)$ and $u_{-1}^n$ respectively and those are known from the previous section. Note that again we have a discrete entropy inequality of the form
\begin{equation}
  \Dplust \eta_j^n + \Dminus q_j^n \leq 0,\qquad n\geq 1,~j\geq 1.
  \label{discrete entropy inequality on R+}
\end{equation}

\begin{definition}[Entropy solution on $\R^+$]\label{def: entropy solution on R+}
  We say $u\in\mathcal{C}([0,T];\mathrm{L}^1(\R^+))\cap\\\mathrm{L}^\infty(\R^+\times (0,T))$ is an entropy solution of Equation~\eqref{conservation law on R+} if for all $c\in\R$,
  \begin{multline*}
    \int_0^T \int_{\R^+} (|u-c|\varphi_t + |f(u)-f(c)|\varphi_x) \diff x\diff t -\int_{\R^+}|u(x,T)-c|\varphi(x,T)\diff x\\
    +\int_{\R^+}|u_0(x)-c|\varphi(x,0))\diff x + \int_0^T |f(u(0+,t))-f(c)|\varphi(0,t)\diff t \geq 0
  \end{multline*}
  for all nonnegative $\varphi\in\mathcal{C}^\infty([0,\infty)\times[0,T])$ and
  \begin{equation*}
    f(u(0+,t)) = g(u(0-,t))
  \end{equation*}
  holds for almost every $t\in(0,T)$.
\end{definition}

Before we calculate convergence rates on $\R^+$ we need two auxiliary lemmas that are consequences of the monotonicity of the flux.

\begin{lemma}[Bound on the temporal total variation]\label{Lemma: Bound on temporal variation}
  If the numerical scheme \eqref{numerical scheme on R+} satisfies the CFL condition~\eqref{CFL condition} the temporal variation of the numerical solution is bounded, specifically, for every $j\in\Z$ we have
  \begin{equation*}
    \sum_{n=0}^M |u_j^{n+1} - u_j^n| \leq C \TV(u_0)
  \end{equation*}
  where $\TV(u_0)$ refers to the total variation of $u_0$ on the whole real line.
\end{lemma}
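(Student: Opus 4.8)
The plan is to show that the temporal differences $w_j^n := u_j^{n+1}-u_j^n$ themselves obey a monotone, variable-coefficient upwind recursion, and then to exploit the strict monotonicity of $f$ to pass from a conservative, flux-weighted sum over time to the plain sum appearing in the statement. First I would differentiate the update \eqref{numerical scheme on R+} in time: subtracting the scheme at levels $n$ and $n-1$ and using the mean value theorem to write $f(u_j^{n+1})-f(u_j^n)=b_j^n w_j^n$ with $b_j^n\in[\alpha,\|f'\|_\infty]$ (here $f_u\geq\alpha>0$ guarantees the lower bound) gives, for $j\geq 1$,
\[
  w_j^{n+1} = (1-\lambda b_j^n)\,w_j^n + \lambda b_{j-1}^n\, w_{j-1}^n .
\]
By the CFL condition~\eqref{CFL condition} we have $\lambda b_j^n\leq\lambda\|f'\|_\infty\leq 1$, so both coefficients are nonnegative and the triangle inequality yields $|w_j^{n+1}|\leq (1-\lambda b_j^n)|w_j^n|+\lambda b_{j-1}^n|w_{j-1}^n|$.

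Next I would sum this inequality over $n$ to obtain a spatial recursion for the flux-weighted temporal sums $S_j:=\sum_{n=0}^M \lambda b_j^n|w_j^n|$. Rewriting the inequality as $\lambda b_j^n|w_j^n|\leq (|w_j^n|-|w_j^{n+1}|)+\lambda b_{j-1}^n|w_{j-1}^n|$ and telescoping the first term in $n$ (discarding the nonnegative endpoint $|w_j^{M+1}|$) gives $S_j\leq |w_j^0|+S_{j-1}$. Iterating this down to the boundary cell $j=0$ yields $S_j\leq \sum_{j'=1}^j|w_{j'}^0|+S_0$. The spatial sum of the first temporal differences is controlled by the initial variation: since $|w_{j'}^0|=\lambda|f(u_{j'}^0)-f(u_{j'-1}^0)|\leq \lambda\|f'\|_\infty|u_{j'}^0-u_{j'-1}^0|$, summing over $j'$ gives $\sum_{j'}|w_{j'}^0|\leq \lambda\|f'\|_\infty \TV(u_0)$, where I use that the cell averaging in \eqref{numerical scheme on R+} is variation-diminishing, i.e.\ $\TV(u^0)\leq\TV(u_0)$.

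Finally, using $b_j^n\geq\alpha$ I would convert the flux-weighted sum back to the quantity in the statement via $\alpha\lambda\sum_{n=0}^M|w_j^n|\leq S_j$, whence
\[
  \sum_{n=0}^M|u_j^{n+1}-u_j^n|\leq \frac{1}{\alpha\lambda}\,S_j\leq C\,\TV(u_0),
\]
with $C$ depending only on $\alpha$ and $\|f'\|_\infty$. This final step is precisely where strict monotonicity is essential: without a positive lower bound on $f'$ one cannot recover the unweighted temporal sum from the conservative, flux-weighted telescoping, and the bound would degenerate.

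The step I expect to require the most care is the boundary term $S_0=\sum_n \lambda b_0^n|w_0^n|$ produced by telescoping down to $j=0$. Because the boundary value is $u_0^n=f^{-1}(g(u_{-1}^n))$ and $f^{-1}\circ g$ is Lipschitz (with constant $\|g'\|_\infty/\alpha$, again using $f'\geq\alpha$), one has $|w_0^n|\leq (\|g'\|_\infty/\alpha)\,|u_{-1}^{n+1}-u_{-1}^n|$, so $S_0$ is bounded by a constant times the temporal variation of the neighbouring cell of the $\R^-$ scheme, which is itself controlled by $\TV(u_0)$ through the same mechanism applied on $\R^-$. Folding this bound into the telescoping closes the estimate uniformly in $j$ and completes the proof.
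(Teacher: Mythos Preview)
Your argument is correct and follows essentially the same architecture as the paper: derive a one-step spatial recursion for the temporal variation at cell $j$ in terms of that at cell $j-1$, iterate it towards the boundary, and cross the interface at $j=0$ via the Lipschitz continuity of $f^{-1}\circ g$. The packaging differs, however. You linearize the time increments by the mean value theorem and telescope the \emph{flux-weighted} sums $S_j=\sum_n\lambda b_j^n|w_j^n|$, obtaining the clean recursion $S_j\leq|w_j^0|+S_{j-1}$, and then invoke $f'\geq\alpha$ a second time to strip off the weights. The paper instead manipulates spatial differences to get a recursion directly for the \emph{unweighted} temporal sums,
\[
  \sum_{n=0}^M |u_j^{n+1} - u_j^n| \;\leq\; |u_j^0-u_{j-1}^0| + \sum_{n=0}^{M-1}|u_{j-1}^{n+1} - u_{j-1}^n|,
\]
in which the right-hand sum loses one time level per spatial step. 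That feature makes the paper's iteration terminate exactly after $M$ steps (matching the domain of dependence), so on $\R^-$ no tail term survives. Your recursion does not shed a time level, so on $\R^-$ you implicitly need $S_{-K}\to 0$ as $K\to\infty$; this is true (for instance because $\sum_{j\leq -1}|w_j^0|\leq\lambda\|g'\|_\infty\TV(u_0)<\infty$ together with finite propagation speed of the upwind scheme forces $S_{-K}\to 0$), but it deserves one sentence. With that small addition the two proofs are equivalent, and both use strict monotonicity only at the interface (in your version, also to invert the weight $b_j^n\geq\alpha$).
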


\begin{proof}
  Let first $j\geq 1$. Using the CFL condition~\eqref{CFL condition} and the monotonicity of the flux, i.e., $f'>0$, we find that
  \begin{align*}
    |u_j^n - u_{j-1}^n - \lambda (f(u_j^n) - f(u_{j-1}^n))| &= |u_j^n - u_{j-1}^n - \lambda  f'(u^*)(u_j^n - u_{j-1}^n)|\\
    &=(1-\lambda  f'(u^*))|u_j^n - u_{j-1}^n|\\
    &= |u_j^n - u_{j-1}^n| - \lambda  f'(u^*)|u_j^n - u_{j-1}^n|\\
    &= |u_j^n - u_{j-1}^n| - \lambda  |f(u_j^n) - f(u_{j-1}^n)|
  \end{align*}
  and hence
  \begin{align*}
    |u_j^{n+1} - u_{j-1}^{n+1}| &= |u_j^n - u_{j-1}^n - \lambda (f(u_j^n) - f(u_{j-1}^n)) + \lambda  (f(u_{j-1}^n) - f(u_{j-2}^n))|\\
    &\leq |u_j^n - u_{j-1}^n - \lambda (f(u_j^n) - f(u_{j-1}^n))| + \lambda  |f(u_{j-1}^n) - f(u_{j-2}^n)|\\
    &=|u_j^n - u_{j-1}^n| - \lambda |f(u_j^n) - f(u_{j-1}^n)| + \lambda  |f(u_{j-1}^n) - f(u_{j-2}^n)|\\
    &=|u_j^n - u_{j-1}^n| - |u_j^{n+1} - u_j^n| + |u_{j-1}^{n+1} - u_{j-1}^n|
  \end{align*}
  where we have used the definition of the numerical scheme~\eqref{numerical scheme on R+} in the last step.
  Taking the sum over $n=0,\ldots,M-1$ yields
  \begin{equation*}
    \sum_{n=0}^{M-1} |u_j^{n+1}-u_{j-1}^{n+1}| \leq \sum_{n=0}^{M-1} |u_j^n - u_{j-1}^n| - \sum_{n=0}^{M-1} |u_j^{n+1} - u_j^n| +  \sum_{n=0}^{M-1} |u_{j-1}^{n+1} - u_{j-1}^n|
  \end{equation*}
  where we can cancel equal terms to get
  \begin{equation}
    |u_j^M-u_{j-1}^M| \leq  |u_j^0 - u_{j-1}^0| - \sum_{n=0}^{M-1} |u_j^{n+1} - u_j^n| +  \sum_{n=0}^{M-1} |u_{j-1}^{n+1} - u_{j-1}^n|.
    \label{bound on u^M}
  \end{equation}
  Because of the CFL condition~\eqref{CFL condition} we have
  \begin{equation*}
  	|u_j^{M+1}-u_j^M| = \lambda  |f(u_j^M)-f(u_{j-1}^M)| = \lambda f'(u^*)  |u_j^M - u_{j-1}^M| \leq |u_j^M-u_{j-1}^M|
  \end{equation*}
  which together with~\eqref{bound on u^M} yields
  \begin{equation*}
  	|u_j^{M+1} - u_j^M| \leq |u_j^0 - u_{j-1}^0| - \sum_{n=0}^{M-1} |u_j^{n+1} - u_j^n| +  \sum_{n=0}^{M-1} |u_{j-1}^{n+1} - u_{j-1}^n|
  \end{equation*}
  and thus
  \begin{equation}
  	\sum_{n=0}^M |u_j^{n+1} - u_j^n| \leq |u_j^0-u_{j-1}^0| + \sum_{n=0}^{M-1}|u_{j-1}^{n+1} - u_{j-1}^n|.
  	\label{estimate for the temporal variation}
  \end{equation}
  By substituting $f$ with $g$ in the above calculations, the estimate \eqref{estimate for the temporal variation} also holds for $j<0$.
  The estimate \eqref{estimate for the temporal variation} now allows us to bound the temporal variation of the numerical scheme by the total variation of the initial datum in the following way. If $j>M$ or $j<0$ we can use the estimate~\eqref{estimate for the temporal variation} iteratively to get
  \begin{equation*}
  	\sum_{n=0}^M |u_j^{n+1} - u_j^n| \leq \sum_{i=j-M+1}^j |u_{i}^0 - u_{i-1}^0| + |u_{j-M}^1 - u_{j-M}^0|.
  \end{equation*}
  Using the definition of the scheme~\eqref{numerical scheme on R+}, we get
  \begin{equation*}
  	|u_{j-M}^1 - u_{j-M}^0| = \lambda |f(u_{j-M}^0) - f(u_{j-M-1}^0)| \leq C \lambda |u_{j-M}^0 - u_{j-M-1}^0|
  \end{equation*}
  such that we have
  \begin{equation*}
  	\sum_{n=0}^M |u_j^{n+1} - u_j^n| \leq C \sum_{i=j-M}^j |u_{i}^0 - u_{i-1}^0| \leq C \TV(u_0).
  \end{equation*}
  If on the other hand $0\leq j\leq M$ we get
  \begin{equation*}
  	\sum_{n=0}^M |u_j^{n+1} - u_j^n| \leq \sum_{i=1}^j |u_{i}^0 - u_{i-1}^0| + \sum_{n=0}^{M-j}|u_0^{n+1} - u_0^n|.
  \end{equation*}
  Using the definition of $u_0^n$ in~\eqref{numerical scheme on R+} and applying~\eqref{estimate for the temporal variation} iteratively again, we get
  \begin{align*}
  	\sum_{n=0}^{M-j} |u_0^{n+1}-u_0^n| &= \sum_{n=0}^{M-j} \left| f^{-1}\left(g(u_{-1}^{n+1})\right)- f^{-1}\left(g(u_{-1}^n)\right) \right|\\
  	&\leq \frac{C}{\alpha} \sum_{n=0}^{M-j} |u_{-1}^{n+1} - u_{-1}^n|\\
  	&\leq C \mkern-10mu\sum_{i=-1-(M-j)}^{-1} \mkern-10mu|u_i^0 - u_{i-1}^0|
  \end{align*}
  such that we have
\begin{equation*}
	\sum_{n=0}^M |u_j^{n+1} - u_j^n| \leq C \mkern-10mu\sum_{i=-1-(M-j)}^j \mkern-10mu|u_{i}^0 - u_{i-1}^0| \leq C \TV(u_0)
\end{equation*}
which concludes the proof.
\end{proof}

\begin{lemma}\label{Lemma: Lipschitz continuity in space of f(u)}
  Let $u$ be the entropy solution of~\eqref{conservation law on R+} and assume $f'>0$. Then $f(u)$ is Lipschitz continuous in space, in the sense that
  \begin{equation*}
    \int_0^T |f(u(x,t)) - f(u(y,t))|\diff t \leq C|x-y|\qquad \text{for all }x,y\in\R^+.
  \end{equation*}
\end{lemma}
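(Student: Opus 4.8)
The plan is to reduce the claimed spatial Lipschitz bound for $f(u)$ to a uniform bound on the total variation of $u$ \emph{in time}, and then to invoke the continuous analogue of Lemma~\ref{Lemma: Bound on temporal variation}. The starting point is that, in the sense of distributions, the conservation law reads $\partial_x f(u) = -\partial_t u$. Thus, for $x>y$ in $\R^+$ (the case $x<y$ being symmetric) and almost every fixed $t$,
\[
  f(u(x,t)) - f(u(y,t)) = \int_y^x \partial_z f(u(z,t))\diff z = -\int_y^x \partial_t u(z,t)\diff z .
\]
Taking absolute values, integrating over $t\in(0,T)$, and applying Fubini's theorem together with the triangle inequality for the time-derivative measure gives
\[
  \int_0^T |f(u(x,t)) - f(u(y,t))|\diff t \leq \int_y^x \Big( \int_0^T |\partial_t u(z,t)|\diff t \Big)\diff z ,
\]
where the inner integral is precisely the total variation in time of the map $t\mapsto u(z,t)$ at the fixed point $z$.

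It therefore suffices to bound this temporal total variation by $C\,\TV(u_0)$ uniformly in $z\in\R^+$; the estimate of the lemma then follows immediately with constant $C\,\TV(u_0)$, since $\int_y^x \diff z = |x-y|$. This uniform bound is exactly the continuous counterpart of Lemma~\ref{Lemma: Bound on temporal variation}: the cancellation argument that there controls $\sum_{n} |u_j^{n+1}-u_j^n|$ by $C\,\TV(u_0)$ relies only on the strict monotonicity $f'\geq\alpha>0$ and the CFL condition, and it survives in the limit. Concretely, I would combine the discrete bound of Lemma~\ref{Lemma: Bound on temporal variation} with the convergence $u_\Dt\to u$ in $\mathrm{L}^1$ and the lower semicontinuity of the total variation to transfer the uniform discrete control of the temporal oscillations to $u$ itself.

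The first step is essentially bookkeeping once one agrees to treat $\partial_t u$ as a finite measure on $\R^+\times(0,T)$, the only technical point being the measure-theoretic justification of Fubini for the time slices of a $\BV$ function, which is standard. I expect the real difficulty to lie in the temporal-variation bound of the second step, and in particular in making it \emph{uniform up to the inflow boundary} $x=0$, where $u$ is prescribed through the Rankine--Hugoniot datum $f^{-1}(g(u(0-,\cdot)))$ rather than by the initial data alone. This is exactly where the monotonicity hypothesis $f'>0$ is indispensable -- it guarantees that $x=0$ is an inflow boundary and that temporal oscillations are not amplified as waves are carried into $\R^+$ -- and it is the reason Lemma~\ref{Lemma: Bound on temporal variation} was established beforehand.
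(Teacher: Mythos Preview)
Your proposal is correct and reaches the same bound as the paper, but by a different route. The paper argues by role-swapping: setting $w := f(u)$ and $h := f^{-1}$ (which exists and is Lipschitz since $f'\geq\alpha>0$), the equation becomes $w_x + h(w)_t = 0$, a scalar conservation law with $x$ playing the role of time; the standard $\mathrm{L}^1$-Lipschitz-in-time estimate (cited from Holden--Risebro and Ridder--Ruf) then gives the claim directly, the only input being a temporal total-variation bound on the ``initial datum'' $u(0+,\cdot)$, which the paper takes from Lemma~\ref{Lemma: Bound on temporal variation} in the limit. Your direct computation is essentially an unwinding, in the original variables, of the proof of that cited Lipschitz estimate: from $\partial_x f(u)=-\partial_t u$ you bound $\int_0^T |f(u(x,t))-f(u(y,t))|\diff t$ by $\int_y^x \TV_t(u(z,\cdot))\diff z$. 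The trade-off is that the paper only needs the temporal $\TV$ bound at the inflow boundary $z=0$ (the role-swapped TVD property then propagates it inward), whereas your argument uses it at every $z$---but since Lemma~\ref{Lemma: Bound on temporal variation} already gives the discrete bound uniformly in $j$, this costs nothing extra. One technical remark: passing the \emph{pointwise-in-$z$} bound $\TV_t(u(z,\cdot))\leq C$ to the limit via lower semicontinuity would require slice-wise convergence $u_\Dt(z,\cdot)\to u(z,\cdot)$ in $\mathrm{L}^1(0,T)$ for a.e.\ $z$; it is cleaner (and sufficient for your inequality) to pass the \emph{integrated} bound $\int_y^x \TV_t(u_\Dt(z,\cdot))\diff z \leq C\,\TV(u_0)\,|x-y|$ to the limit instead.
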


\begin{proof}
  Since $u$ is bounded, we can assume that $f'\geq \alpha>0$. Thus the flux is invertible with Lipschitz continuous inverse. By setting $w=f(u)$ and $h=f^{-1}$ we find that $w$ satisfies
  \begin{equation*}
    w_x + h(w)_t =0,\qquad (t,x)\in(0,T)\times\R^+.
  \end{equation*}
  By the standard theory for conservation laws (with the roles of $x$ and $t$ reversed) adapted to the bounded domain $[0,T]$ we see that $w$ is Lipschitz continuous in $x$ with values in $\Lone(0,T)$, i.e.,
  \begin{equation*}
    \int_0^T |f(u(x,t)) - f(u(y,t))|\diff t = \int_0^T |w(x,t) - w(y,t)|\diff t \leq C|x-y|,
  \end{equation*}
  cf. \cite[Thm. 2.15]{holden2015front} or \cite[Lem. 4]{Ridder2019}. Note that an application of \cite[Lem. 4]{Ridder2019} requires, in particular, a temporal total variation bound of $u(0+,t)$ which follows from Lemma~\ref{Lemma: Bound on temporal variation} on a discrete level and caries over in the limit.
\end{proof}

We will now describe how to modify the steps in Section~\ref{subsec: Convergence rates on R-} in order to get a convergence rate on $\R^+$. We start by defining
\begin{align*}
  L(u,c,\varphi) =& \int_0^T\int_{\R^+} \left( |u-c| \varphi_t + q(u,c)\varphi_x \right)\diff x\diff t -\int_{\R^+} |u(x,T)-c|\varphi(x,T)\diff x\\
  &+ \int_{\R^+} |u_0(x)-c|\varphi(x,0)\diff x + \int_0^T q(u(0+,t),c)\varphi(0,t)\diff t
\end{align*}
and
\begin{equation*}
  \Lambda_{\varepsilon,\varepsilon_0}(u,v) = \int_0^T\int_{\R^+} L(u,v(y,s),\varphi(\cdot,\cdot,y,s))\diff y\diff s
\end{equation*}
where again $\varphi=\omega_{\varepsilon}(x-y)\omega_{\varepsilon_0}(t-s)$.
\begin{lemma}[Kuznetsov-type lemma]\label{Lemma: Kuznetsov 2}
  Let $u$ be the entropy solution of~\eqref{conservation law on R+}. Then, for any function $v:[0,T]\to (\Lone\cap\BV)(\R^+)$ such that the one-sided limits $v(t\pm)$ exist in $\Lone$, we have
  \begin{multline*}
    \|u(\cdot,T) - v(\cdot,T)\|_{\mathrm{L}^1(\R^+)} \leq \|u_0 - v(\cdot,0)\|_{\mathrm{L}^1(\R^+)} - \Lambda_{\varepsilon,\varepsilon_0}(v,u)\\
    + C\bigg( \varepsilon +\varepsilon_0 + \nu_T(v,\varepsilon_0) + \nu_0(v,\varepsilon_0) + \mu(v(\cdot,T),\varepsilon)) + \mu(v(\cdot,0),\varepsilon)) \bigg) \\
    + \int_0^T\int_{\R^+}\int_0^T\left( q(u(0+,t),v(y,s)) + q(v(0+,t),u(y,s)) \right)\varphi(0,t,y,s)\diff t\diff y\diff s
  \end{multline*}
  for some constant $C$ independent of $\varepsilon$ and $\varepsilon_0$.
\end{lemma}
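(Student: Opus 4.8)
The plan is to follow the proof of Lemma~\ref{Lemma: Kuznetsov} almost verbatim, since the only structural difference between the half-line problems~\eqref{conservation law on R-} and~\eqref{conservation law on R+} is the sign of the boundary term in the entropy functional $L$. First I would expand $\Lambda_{\varepsilon,\varepsilon_0}(u,v)$ and $\Lambda_{\varepsilon,\varepsilon_0}(v,u)$ and add them, exploiting $\varphi_t=-\varphi_s$, $\varphi_x=-\varphi_y$, the symmetry relations~\eqref{properties of varphi}, and the symmetry $q(a,b)=q(b,a)$ of the Kru\v{z}kov entropy flux. As in the classical doubling-of-variables computation, the interior integrals cancel after relabelling $(x,t)\leftrightarrow(y,s)$, leaving only terminal, initial, and boundary contributions. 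The traces $u(0+,t)$ and $v(0+,t)$ entering the boundary term are well defined for the entropy solution and by the hypothesis that the one-sided limits of $v$ exist.

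Concretely, I expect to obtain
\[
  \Lambda_{\varepsilon,\varepsilon_0}(u,v) = -\Lambda_{\varepsilon,\varepsilon_0}(v,u) - \mathbf{A} + \mathbf{B} + \mathbf{C},
\]
where $\mathbf{A}$ and $\mathbf{B}$ are the terminal and initial terms from the $\R^-$ proof with $\R^-$ replaced by $\R^+$, and
\[
  \mathbf{C} = \int_0^T\!\!\int_{\R^+}\!\!\int_0^T \left( q(u(0+,t),v(y,s)) + q(v(0+,t),u(y,s)) \right)\varphi(0,t,y,s)\diff t\diff y\diff s.
\]
The crucial difference from Lemma~\ref{Lemma: Kuznetsov} is that the boundary term in the definition of $L$ on $\R^+$ carries a \emph{plus} sign, so $\mathbf{C}$ now enters with a plus sign rather than a minus. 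Since $u$ is an entropy solution, $L(u,c,\varphi)\geq 0$ for every $c$ and every nonnegative $\varphi$, whence $\Lambda_{\varepsilon,\varepsilon_0}(u,v)\geq 0$, and rearranging gives $\mathbf{A}\leq -\Lambda_{\varepsilon,\varepsilon_0}(v,u)+\mathbf{B}+\mathbf{C}$.

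Next I would bound $\mathbf{A}$ from below by $\|u(\cdot,T)-v(\cdot,T)\|_{\mathrm{L}^1(\R^+)}$ minus a half-sum of moduli of continuity of $u$ and $v$ at time $T$, and $\mathbf{B}$ from above by $\|u_0-v(\cdot,0)\|_{\mathrm{L}^1(\R^+)}$ plus the analogous moduli at time $0$, exactly as in the unbounded case (cf. \cite{coclite2017convergent,holden2015front}). The moduli of the exact solution $u$ are then absorbed into the $C(\varepsilon+\varepsilon_0)$ terms: the temporal ones via Lipschitz continuity in time and the spatial ones via the TVD property, giving $\nu(u,\varepsilon_0)\leq C\,\TV(u_0)\varepsilon_0$ and $\mu(u,\varepsilon)\leq \TV(u_0)\varepsilon$. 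Collecting everything yields the claimed inequality, with the boundary term $\mathbf{C}$ remaining on the right-hand side.

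The main point to watch — and the only genuine deviation from the $\R^-$ argument — is precisely the placement and sign of $\mathbf{C}$. On $\R^-$ the boundary term had the favorable sign, landing on the left-hand side where, being nonnegative, it could simply be discarded; on $\R^+$ it has the opposite sign and must instead be carried along on the right-hand side, to be handled later when the $\R^-$ and $\R^+$ estimates are combined through the Rankine--Hugoniot matching condition. A secondary technical point is that the regularity estimates for $u$ on $\R^+$ are no longer immediate from the standard theory because of the inflow boundary; here I would invoke the temporal variation bound (Lemma~\ref{Lemma: Bound on temporal variation}) and the spatial Lipschitz continuity of $f(u)$ (Lemma~\ref{Lemma: Lipschitz continuity in space of f(u)}), together with the invertibility of $f$, to supply the needed moduli-of-continuity bounds for the exact entropy solution.
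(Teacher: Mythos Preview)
Your proposal is correct and follows exactly the approach the paper takes: its proof of Lemma~\ref{Lemma: Kuznetsov 2} consists of the single sentence ``the proof follows the same steps, \textit{mutatis mutandis}, as the proof of the Kuznetsov-type lemma~\ref{Lemma: Kuznetsov} on $\R^-$,'' and you have spelled out precisely those steps, correctly identifying the sign flip on $\mathbf{C}$ as the sole substantive change. Your final paragraph invoking Lemmas~\ref{Lemma: Bound on temporal variation} and~\ref{Lemma: Lipschitz continuity in space of f(u)} for the moduli of continuity of $u$ is more machinery than needed here---the standard Lipschitz-in-time and TVD bounds for entropy solutions of the IBVP suffice, just as in the $\R^-$ case---but it is not wrong.
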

Note that this time the term involving $q$ is on the right hand side of the inequality.
\begin{proof}
	The proof follows the same steps, \textit{mutatis mutandis}, as the proof of the Kuznetsov-type lemma~\ref{Lemma: Kuznetsov} on $\R^-$.
\end{proof}

\begin{lemma}\label{Lemma: Estimate on Lambda 2}
  The estimate
  \begin{equation*}
    -\Lambda_{\varepsilon,\varepsilon_0}(u_\Dt,u) \leq C\left( \Dx + \frac{\Dx}{\varepsilon} + \frac{\Dx}{\varepsilon_0} + \frac{\Dt}{\varepsilon_0} \right)
  \end{equation*}
  holds for some constant $C$ independent of $\Dx,\Dt,\varepsilon$, and $\varepsilon_0$.
\end{lemma}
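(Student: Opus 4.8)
The plan is to carry the whole-line computation of $-\Lambda_{\varepsilon,\varepsilon_0}(u_\Dt,u)$ from \cite{holden2015front} over to the half-line $\R^+$, the \emph{only} genuinely new ingredient being the inflow boundary at $x=0$; this is the detailed argument to which the proof of Lemma~\ref{Lemma: Estimate on Lambda} refers. First I would insert the piecewise constant numerical solution $u_\Dt$ into $L(u_\Dt,c,\varphi)$ with $c=u(y,s)$, so that each space-time integral collapses into a sum over the grid cells $\cell_j^n$ with $j\geq1$, the entropy $\eta_j^n=|u_j^n-c|$ and entropy flux $q_j^n=q(u_j^n,c)$ playing the role of the cell values. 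I would then perform summation by parts in the time index $n$ and in the space index $j$. The temporal summation produces the forward differences $\Dplust\eta_j^n$ together with boundary contributions at $t=0$ and $t=T$ that pair with the initial- and terminal-time integrals appearing in $L$, while the spatial summation produces the backward differences $\Dminus q_j^n$ together with a single boundary contribution located at the interface $x=0$.

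The crucial observation is that this spatial boundary contribution equals $-\sum_n q_1^n\int_{\cell^n}\varphi(0,t)\diff t$ and therefore cancels exactly against the boundary flux term $+\int_0^T q(u_\Dt(0+,t),c)\varphi(0,t)\diff t$ carried by $L$ on $\R^+$, since $u_\Dt(0+,t)=u_1^n$ on $\cell^n$. This is precisely why the boundary term enters with a $+$ sign on $\R^+$, and it is the analogue of the cancellation invoked in Lemma~\ref{Lemma: Estimate on Lambda}. After the cancellation the dominant contribution takes the form $-\sum_n\sum_j\Dt\Dx\,(\Dplust\eta_j^n+\Dminus q_j^n)\,\bar\varphi_j^n$ with a nonnegative weight $\bar\varphi_j^n$ assembled from $\varphi$; since $\varphi\geq0$ and the discrete entropy inequality~\eqref{discrete entropy inequality on R+} gives $\Dplust\eta_j^n+\Dminus q_j^n\leq0$ for $j\geq1$, this term is nonnegative and hence only helps the desired upper bound on $-\Lambda_{\varepsilon,\varepsilon_0}(u_\Dt,u)$.

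It then remains to collect the consistency errors produced when cell-integrals of $\varphi$ are replaced by point values and when the evaluation points of $\varphi$ in the temporal and spatial summations-by-parts are reconciled. These I would estimate, exactly as on the whole line, using the mollifier bounds~\eqref{properties of the mollifier} together with the spatial total-variation bound and the temporal total-variation bound supplied by Lemma~\ref{Lemma: Bound on temporal variation}; collectively they account for the terms $\Dx+\Dx/\varepsilon+\Dx/\varepsilon_0+\Dt/\varepsilon_0$. The one point requiring care beyond the whole-line case is the first interior cell $j=1$: the cancellation leaves the spatial differences running over $j\geq2$ only, so reconstructing the full entropy residual there forces one to restore $\Dminus q_1^n$, whose ghost-cell value $q_0^n=q(u_0^n,c)$ I would control through $|f(u_1^n)-f(u_0^n)|=\lambda^{-1}|u_1^{n+1}-u_1^n|$ and the temporal-variation bound of Lemma~\ref{Lemma: Bound on temporal variation}. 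Integrating the assembled estimate over $(y,s)$ and using $\int_{\R}\omega_\varepsilon(x-y)\diff y\leq1$ and $\int_0^T\omega_{\varepsilon_0}(t-s)\diff s\leq1$ would then yield the claim.

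The step I expect to be the main obstacle is this boundary bookkeeping: ensuring that the spatial boundary term cancels cleanly while keeping exact track of the residual generated at $j=1$, where the ghost value $u_0^n$ enters the discrete entropy inequality. This is the only place where the half-line estimate genuinely departs from its whole-line counterpart, and it is what forces the extra $\Dx/\varepsilon_0$ term that is absent from the $\R^-$ estimate in Lemma~\ref{Lemma: Estimate on Lambda}.
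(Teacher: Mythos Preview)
Your overall strategy---summation by parts, apply the discrete entropy inequality~\eqref{discrete entropy inequality on R+}, then collect consistency errors---matches the paper. But your boundary bookkeeping is off in a way that misidentifies the origin of the extra $\Dx/\varepsilon_0$ term.

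In the paper's grid, $x_{-\hf}=0$ so the first cell inside $\R^+$ is $\cell_0=(0,\Dx)$ and $u_\Dt(0+,t)=u_0^n$, \emph{not} $u_1^n$. The boundary value $u_0^n$ is prescribed by the ghost-cell relation in~\eqref{numerical scheme on R+}, so the discrete entropy inequality~\eqref{discrete entropy inequality on R+} is available only for $j\geq 1$, not for $j=0$. Consequently, the spatial boundary flux from $L$ and the spatial summation-by-parts boundary term both carry $q_0^n$, and they do cancel exactly---but this leaves the sums starting at $j=0$, and after invoking the entropy inequality for $j\geq 1$ you are left with the uncontrolled $j=0$ contribution $\sum_n \Dplust\eta_0^n\iint_{\cell_0^n}\varphi^{n+1}\diff x\diff t$. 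The paper handles this term by a second summation by parts in $n$: the resulting temporal boundary pieces are $O(\Dx)$ (using only $|\eta_0^n|\leq C\|u_0\|_\infty$), and the interior sum $\sum_n \eta_0^n\iint_{\cell_0^n}\Dplust\varphi^n\diff x\diff t$ is bounded by $C\Dx/\varepsilon_0$ via~\eqref{properties of the mollifier}. This is where the extra $\Dx/\varepsilon_0$ comes from---not from ``restoring $\Dminus q_1^n$'' as you propose.

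Your suggested mechanism (controlling a leftover $\Dminus q_1^n$ via $|f(u_1^n)-f(u_0^n)|=\lambda^{-1}|u_1^{n+1}-u_1^n|$ and Lemma~\ref{Lemma: Bound on temporal variation}) is not what the proof needs: the discrete entropy inequality already covers $j=1$, so no such term is missing. The genuine residual sits at $j=0$ and is an \emph{entropy} term $\Dplust\eta_0^n$, not a flux term, and its estimate uses only the $\Linfty$ bound on $u_\Dt$ together with the mollifier derivative bound---Lemma~\ref{Lemma: Bound on temporal variation} does not enter this particular lemma at all.
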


Note that the right-hand side of the inequality contains the term $\frac{\Dx}{\varepsilon_0}$ which was not present in Lemma~\ref{Lemma: Estimate on Lambda}, but will not change the overall convergence rate.
\begin{proof}
Using summation by parts and the discrete entropy inequality~\eqref{discrete entropy inequality on R+}, $\Dplust \eta_j^n + \Dminus q_j^n \leq 0$ for $j\geq 1$, we find
\begin{align*}
    &-L(u_{\Delta t},u,\varphi)\\
    &= - \sum_{n=0}^M\sum_{j=0}^\infty \left(\eta_j^n \iint_{\cell_j^n}\varphi_t\diff x\diff t +  q_j^n \iint_{\cell_j^n}\varphi_x \diff x\diff t \right) \\
    &\mathrel{\phantom{=}} - \sum_{j=0}^\infty \eta_j^0\int_{\cell_j}\varphi^0\diff x + \sum_{j=0}^\infty\eta_j^{M+1}\int_{\cell_j}\varphi^{M+1}\diff x - \sum_{n=0}^M q_0^n\int_{\cell^n}\varphi_{-\hf}\diff t \\
    &= \sum_{n=0}^M \sum_{j=0}^\infty \left(\Dplust\eta_j^n\iint_{\cell_j^n} \varphi^{n+1} \diff x\diff t+  \Dminus q_{j+1}^n\iint_{\cell_j^n} \varphi_{j+\frac{1}{2}} \diff x\diff t \right)\\
    &\leq \sum_{n=0}^M \Bigg(\Dplust\eta_0^n \iint_{\cell_0^n} \varphi^{n+1}\diff x\diff t + \sum_{j=0}^\infty  \Dminus q_{j+1}^n \iint_{\cell_j^n}\varphi_\jphf\diff x\diff t \\
    &\mathrel{\phantom{=}}\phantom{\sum_{n=0}^N\Bigg(}- \sum_{j=1}^\infty  \Dminus q_j^n \iint_{\cell_j^n}\varphi^{n+1}\diff x\diff t \Bigg)\\
    &= \sum_{n=0}^M \underbrace{\Dplust\eta_0^n \iint_{\cell_0^n} \varphi^{n+1}\diff x\diff t}_{=:\mathbf{D^n}} + \sum_{n=0}^M\sum_{j=1}^\infty \underbrace{\Dminus q_j^n \iint_{\cell_j^n}(\varphi_\jmhf - \varphi^{n+1})\diff x\diff t}_{=:\mathbf{E_j^n}}
  \end{align*}
where we have used the notation $\varphi^n = \varphi(x,t^n,y,s)$ and $\varphi_\jphf = \varphi(x_\jphf,t,y,s)$.
%
Concerning the term $\mathbf{D^n}$, using summation by parts again, we find
\begin{multline*}
  \sum_{n=0}^M \int_0^T \int_{\R^+} \mathbf{D^n} \diff y\diff s\\
  = \int_0^T\int_{\R^+} \left( \eta_0^{M+1}\int_{\cell_0}\varphi^{M+1}\diff x - \eta_0^0\int_{\cell_0}\varphi^0 \diff x - \sum_{n=0}^M \eta_0^n \iint_{\cell_0^n}\Dplust \varphi^n\diff x\diff t \right)\diff y\diff s.
\end{multline*}
Here, using the boundedness of $\eta'$ and the properties of the mollifiers~\eqref{properties of the mollifier}, the boundary terms can be estimated as follows:
\begin{equation*}
  \int_0^T \int_{\R^+} \underbrace{\eta_0^{M+1}}_{\leq C\|u_0\|_\infty} \int_{\cell_0}\varphi^{M+1}\diff x\diff y\diff s \leq C \Dx
\end{equation*}
and similarly
\begin{equation*}
  \int_0^T \int_{\R^+} \eta_0^0 \int_{\cell_0}\varphi^0\diff x\diff y\diff s \leq C \Dx.
\end{equation*}
For the remaining term, we can proceed in the following way
\begin{align*}
  \sum_{n=0}^M &\int_0^T\int_{\R^+} \underbrace{\eta_0^n}_{\leq C \|u_0\|_\infty} \iint_{\cell_0^n} \Dplust \varphi^n \diff x\diff t\diff y\diff s\\
  &\leq C  \sum_{n=0}^M\int_0^T\int_{\R^+} \frac{1}{\Dt}\iint_{\cell_0^n}\int_{\cell^n}|\omega_{\varepsilon_0}'(\tau-s)|\diff \tau \omega_{\varepsilon}(x-y)\diff x\diff t\diff y\diff s\\
  &\leq C\sum_{n=0}^M \frac{\Dx\Dt^2}{\Dt\varepsilon_0}\\
  &\leq C\frac{\Dx}{\varepsilon_0}.
\end{align*}
%
We split the term involving $\mathbf{E_j^n}$ as follows:
\begin{align*}
  \sum_{n=0}^M \sum_{j=1}^\infty \mathbf{E_j^n} \leq& \sum_{n=0}^M \sum_{j=1}^\infty \underbrace{|\Dminus q_j^n| \iint_{\cell_j^n}\int_{x_\jphf}^x |\varphi_x(z,t)|\diff z\diff x\diff t}_{=:\mathbf{F_j^n}}\\
  &+ \sum_{n=0}^M \sum_{j=1}^\infty \underbrace{|\Dminus q_j^n| \iint_{\cell_j^n}\int_t^{t^{n+1}} |\varphi_t(x,\tau)|\diff \tau\diff x\diff t}_{=:\mathbf{G_j^n}}.
\end{align*}
For the first term, using the properties of the mollifiers~\eqref{properties of the mollifier} and the Lipschitz continuity of $f$, we find
\begin{align*}
  \int_0^T\int_{\R^+} &\sum_{n=0}^M\sum_{j=1}^\infty \mathbf{F_j^n} \diff y\diff s\\
  &= \sum_{n=0}^M\sum_{j=1}^\infty |\Dminus q_j^n| \int_0^T\int_{\R^+}\iint_{\cell_j^n}\int_{x_\jphf}^x |\omega_{\varepsilon}'(z-y)|\diff z \omega_{\varepsilon_0}(t-s)\diff x\diff t\diff y\diff s\\
  &\leq \sum_{n=0}^M \sum_{j=1}^\infty \frac{C}{\Dx}|u_j^n-u_{j-1}^n| \frac{\Dx^2\Dt}{\varepsilon}\\
  &\leq C \TV(u_0) \frac{\Dx}{\varepsilon}
\end{align*}
and similarly
\begin{equation*}
  \int_0^T\int_{\R^+} \sum_{n=0}^M\sum_{j=1}^\infty \mathbf{G_j^n} \diff y\diff s \leq C \TV(u_0) \frac{\Dt}{\varepsilon_0}.
\end{equation*}
Thus, we have
\begin{equation*}
  -\Lambda_{\varepsilon,\varepsilon_0}(u_\Dt,u) \leq C \left( \Dx + \frac{\Dx}{\varepsilon} + \frac{\Dx}{\varepsilon_0} + \frac{\Dt}{\varepsilon_0} \right)
\end{equation*}
which concludes the proof.
\end{proof}

\begin{theorem}[Convergence rate on $\R^+$]\label{thm: convergence rate on R+}
  Let $u$ be the entropy solution of the initial-boundary value problem~\eqref{conservation law on R+} and $u_\Dt$ the numerical approximation given by~\eqref{numerical scheme on R+}. Then we have the following convergence rate estimate:
  \begin{equation*}
    \|u(\cdot,T) - u_\Dt(\cdot,T)\|_{\mathrm{L}^1(\R^+)} \leq C \sqrt{\Dx}
  \end{equation*}
  for some constant $C$ independent of $\Dx$.
\end{theorem}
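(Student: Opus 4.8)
The plan is to mirror the proof of Theorem~\ref{thm: convergence rate on R-}, feeding the numerical approximation $v=u_\Dt$ into the Kuznetsov-type lemma~\ref{Lemma: Kuznetsov 2} and controlling $-\Lambda_{\varepsilon,\varepsilon_0}(u_\Dt,u)$ with Lemma~\ref{Lemma: Estimate on Lambda 2}. First I would record that $u_\Dt$ is Lipschitz in time and TVD, so that $\nu_0(u_\Dt,\varepsilon_0),\nu_T(u_\Dt,\varepsilon_0)\leq C\TV(u_0)(\varepsilon_0+\Dt)$ and $\mu(u_\Dt(\cdot,0),\varepsilon),\mu(u_\Dt(\cdot,T),\varepsilon)\leq\TV(u_0)\varepsilon$, together with the initial error bound $\|u_\Dt(\cdot,0)-u_0\|_{\mathrm{L}^1(\R^+)}\leq C\TV(u_0)\Dx$ coming from the cell-average discretization. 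Combining the two lemmas and optimizing over the mollification parameters exactly as on $\R^-$ yields the choice $\varepsilon=\varepsilon_0=\sqrt{\Dx}$ and the estimate
\begin{equation*}
  \|u(\cdot,T)-u_\Dt(\cdot,T)\|_{\mathrm{L}^1(\R^+)} \leq C\sqrt{\Dx} + \mathbf{BT},
\end{equation*}
where $\mathbf{BT}$ is the boundary term
\begin{equation*}
  \mathbf{BT} = \int_0^T\!\!\int_{\R^+}\!\!\int_0^T \bigl( q(u(0+,t),u_\Dt(y,s)) + q(u_\Dt(0+,t),u(y,s)) \bigr)\varphi(0,t,y,s)\diff t\diff y\diff s.
\end{equation*}

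The crucial difference from the $\R^-$ situation, and the main obstacle here, is that $\mathbf{BT}$ now sits on the \emph{right}-hand side, so it cannot be discarded for being nonnegative but must be bounded from above. My plan is to show that $\mathbf{BT}$ is, up to errors of order $\sqrt{\Dx}$, the very boundary quantity that was already controlled in the $\R^-$ analysis. Concretely, I would replace $u_\Dt(y,s)$ and $u(y,s)$ inside $q$ by their boundary traces $u_\Dt(0+,t)$ and $u(0+,t)$. Since $\omega_\varepsilon(-y)$ localizes $y$ within $\varepsilon$ of the boundary and $\omega_{\varepsilon_0}(t-s)$ localizes $s$ within $\varepsilon_0$ of $t$, the replacement error is controlled by spatial increments of $f(u_\Dt)$ and $f(u)$ over a length $\varepsilon$ and by temporal increments over a time $\varepsilon_0$. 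For the exact solution the spatial increment is handled by Lemma~\ref{Lemma: Lipschitz continuity in space of f(u)}; for the numerical solution both the spatial and temporal increments near the boundary are controlled by Lemma~\ref{Lemma: Bound on temporal variation}, via the scheme identity $|f(u_i^n)-f(u_{i-1}^n)|=\lambda^{-1}|u_i^{n+1}-u_i^n|$. Together these contribute only $O(\varepsilon+\varepsilon_0)=O(\sqrt{\Dx})$. Using $\int_{\R^+}\omega_\varepsilon(-y)\diff y=\frac{1}{2}$ and the symmetry $q(a,b)=q(b,a)$, the two traces collapse to
\begin{equation*}
  \mathbf{BT} \leq \int_0^T \bigl| f(u(0+,t)) - f(u_\Dt(0+,t)) \bigr|\diff t + C\sqrt{\Dx}.
\end{equation*}

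Finally I would invoke the matching of boundary data across $\xi_1=0$. By the boundary condition in Definition~\ref{def: entropy solution on R+} we have $f(u(0+,t))=g(u(0-,t))$, and by the discrete Rankine--Hugoniot condition in the last line of~\eqref{numerical scheme on R+} we have $f(u_\Dt(0+,t))=g(u_\Dt(0-,t))$ in the limit. Hence the integrand above equals $|g(u(0-,t))-g(u_\Dt(0-,t))|$, which is exactly the entropy-flux quantity that, after the same trace-localization argument applied on $\R^-$, appears as the nonnegative boundary term on the left-hand side of~\eqref{Almost convergence rate on R-}. Theorem~\ref{thm: convergence rate on R-} therefore already guarantees that this quantity is bounded by $C\sqrt{\Dx}$, and substituting this bound gives $\mathbf{BT}\leq C\sqrt{\Dx}$ and hence the claim. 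The delicate point throughout is the trace-localization step, namely making rigorous the passage from the mollified triple integral to the boundary traces, which is precisely why the auxiliary Lemmas~\ref{Lemma: Bound on temporal variation} and~\ref{Lemma: Lipschitz continuity in space of f(u)} were established beforehand.
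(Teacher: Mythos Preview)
Your proposal is correct and follows essentially the same strategy as the paper's proof: apply Lemma~\ref{Lemma: Kuznetsov 2} and Lemma~\ref{Lemma: Estimate on Lambda 2}, then control the boundary term $\mathbf{BT}$ by (i) localizing the interior arguments to traces at $x=0$ using Lemma~\ref{Lemma: Bound on temporal variation} and Lemma~\ref{Lemma: Lipschitz continuity in space of f(u)}, (ii) crossing the interface via the Rankine--Hugoniot identity, and (iii) invoking the already-established estimate~\eqref{Almost convergence rate on R-}. The only minor difference is that the paper localizes just in space (to $(0+,s)$) and then, exploiting that the resulting term is independent of $y$, flips the $y$-integration to $\R^-$ and reinserts $u_\Dt(y,s)$ so as to match the mollified left-hand side of~\eqref{Almost convergence rate on R-} exactly, whereas you localize in both space and time to $(0+,t)$ and then reverse the localization on the $\R^-$ side; your route is slightly cleaner conceptually but requires the temporal regularity of the trace $f(u(0+,\cdot))$, which you did not list explicitly but which follows from the boundary identity and the $\R^-$ analysis.
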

\begin{proof}
  The numerical solution $u_\Dt$ is Lipschitz continuous in time and TVD (for the TVD property of conservation laws on bounded domains see \cite[Lem. 2]{Ridder2019}), and therefore satisfies
  \begin{align*}
    \nu_0(u_\Dt,\varepsilon_0),\,\nu_T(u_\Dt,\varepsilon_0) &\leq C\TV(u_0)(\varepsilon_0 +\Dt)\\
    \text{and}\qquad \mu(u_\Dt(\cdot,0),\varepsilon),~\mu(u_\Dt(\cdot,T),\varepsilon) &\leq \TV(u_0) \varepsilon.
  \end{align*}
  Thus, taking into consideration Lemmas~\ref{Lemma: Kuznetsov} and~\ref{Lemma: Estimate on Lambda}, we have
  \begin{align*}
    \|u(\cdot,T) - u_\Dt(\cdot,T)\|_{\mathrm{L}^1(\R^+)} &\leq \|u_0 - u_\Dt(\cdot,0)\|_{\mathrm{L}^1(\R^+)}\\
    &\mathrel{\phantom{\leq}}+ C\left(\Dx + \Dt +\varepsilon +\varepsilon_0 + \frac{\Dx}{\varepsilon} + \frac{\Dx}{\varepsilon_0} + \frac{\Dt}{\varepsilon_0} \right) + \mathbf{C}
  \end{align*}
  where
  \begin{equation*}
  	\mathbf{C} = \int_0^T\int_{\R^+}\int_0^T \bigg( q(u(0+,t),u_\Dt(y,s)) + q(u_\Dt(0+,t),u(y,s)) \bigg)\varphi(0,t,y,s)\diff t\diff y\diff s.
  \end{equation*}
  Because of our choice of discretizing the initial datum as $u_j^0 = \frac{1}{\Dx}\int_{\cell_j}u_0(x)\diff x$ we have $\|u_\Dt(\cdot,0)-u_0\|_{\mathrm{L}^1(\R^+)} \leq C\TV(u_0)\Dx$ and thus it remains to estimate the term
  \begin{multline*}
    \mathbf{C} = \int_0^T\int_{\R^+}\int_0^T \bigg( \underbrace{ |f(u(0+,t)) -f(u_\Dt(y,s))|}_{=:\mathbf{H}}\\
    + \underbrace{|f(u_\Dt(0+,t)) - f(u(y,s))|}_{=:\mathbf{J}} \bigg)\varphi(0,t,y,s)\diff t\diff y\diff s.
  \end{multline*}
  Here, we split
  \begin{equation*}
    \mathbf{H} \leq \underbrace{|f(u(0+,t)) - f(u_\Dt(0+,s))|}_{\mathbf{H_1}} + \underbrace{|f(u_\Dt(0+,s)) - f(u_\Dt(y,s))|}_{\mathbf{H_2}}.
  \end{equation*}
 Using the Rankine--Hugoniot condition, the term $\mathbf{H_1}$ can be estimated as follows:
\begin{align*}
	\mathbf{H_1} &= |f(u(0+,t)) - f(u_\Dt(0+,s))|\\
	&= |g(u(0-,t)) - g(u_\Dt(0-,s))|\\
	&\leq  |g(u(0-,t)) - g(u_\Dt(y,s))| + |g(u_\Dt(y,s)) -g(u_\Dt(0-,s))|.
\end{align*}
Because $\mathbf{H_1}$ does not depend on $y$ we can use the symmetry of $\varphi$ with respect to $y$ and the estimate~\eqref{Almost convergence rate on R-} to get
  \begin{align*}
  	&\int_0^T\int_{\R^+}\int_0^T \mathbf{H_1} \varphi(0,t,y,s)\diff t\diff y\diff s\\
  	&= \int_0^T\int_{\R^-}\int_0^T \mathbf{H_1} \varphi(0,t,y,s)\diff t\diff y\diff s\\
  	&\leq \int_0^T\int_{\R^-}\int_0^T |g(u(0-,t))-g(u_\Dt(y,s))| \varphi(0,t,y,s)\diff t\diff y\diff s\\
  	&\mathrel{\phantom{=}} + \int_0^T\int_{\R^-}\int_0^T |g(u_\Dt(y,s))-g(u_\Dt(0-,s))| \varphi(0,t,y,s)\diff t\diff y\diff s\\
  	&\leq C\sqrt{\Dx} +  \int_0^T\int_{\R^-}\int_0^T |g(u_\Dt(y,s))-g(u_\Dt(0-,s))| \varphi(0,t,y,s)\diff t\diff y\diff s.
  \end{align*}
  %
  Using the identity
  \begin{equation*}
    |g(u_{i+1}^n) - g(u_i^n)| = \frac{1}{\lambda}|u_{i+1}^{n+1}-u_{i+1}^n|
  \end{equation*}
  and setting $N=\lceil\frac{\varepsilon}{\Dx}\rceil$, we can employ Lemma~\ref{Lemma: Bound on temporal variation} to estimate the integral term in the foregoing estimate as follows:
  \begin{align*}
  	& \int_0^T\int_{\R^-}\int_0^T |g(u_\Dt(y,s))-g(u_\Dt(0-,s))| \varphi(0,t,y,s)\diff t\diff y\diff s\\
  	&= \sum_{n=0}^M\sum_{j=-N}^{-1} |g(u_j^n) - g(u_{-1}^n)| \int_0^T\iint_{\cell_j^n}\omega_{\varepsilon_0}(t-s)\omega_{\varepsilon}(y)\diff t\diff y\diff s\\
  	&\leq C\frac{\Dt\Dx}{\varepsilon} \sum_{n=0}^M \sum_{j=-N}^{-1} \sum_{i=j}^{-2} \underbrace{|g(u_{i+1}^n)-g(u_i^n)|}_{= \frac{1}{\lambda}|u_{i+1}^{n+1}-u_{i+1}^n|}\\
  	&\leq C \frac{\Dt\Dx}{\varepsilon} \sum_{j=-N}^{-1} \sum_{i=j}^{-2} \sum_{n=0}^M |u_{i+1}^{n+1}-u_{i+1}^n|\\
  	&\leq C \frac{\Dt\Dx}{\varepsilon} \sum_{j=-N}^{-1} (-j)\\
  	&\leq C\frac{\Dt\Dx}{\varepsilon} \frac{N(N+1)}{2}\\
  	&\leq C \Dt\left(\frac{\varepsilon}{\Dx}+1\right)\\
  	&\leq C (\varepsilon + \Dt).
  \end{align*}
The term involving $\mathbf{H_2}$ can be estimated analogously.
Then it remains to treat the integral involving $\mathbf{J}$. We split $\mathbf{J}$ as follows
\begin{equation*}
	\mathbf{J} \leq \underbrace{|f(u_\Dt(0+,t)) - f(u(0+,s))|}_{\mathbf{J_1}} + \underbrace{|f(u(0+,s)) - f(u(y,s))|}_{\mathbf{J_2}}
\end{equation*}
and note that the $\mathbf{J_1}$ is the same as $\mathbf{H_1}$.
Lastly, with the help of Lemma~\ref{Lemma: Lipschitz continuity in space of f(u)} we find
  \begin{align*}
    \int_0^T\int_{\R^+}\int_0^T & \mathbf{J_2} \varphi(0,t,y,s)\diff t\diff y\diff s\\
    &= \int_0^T\int_{\R^+}\int_0^T |f(u(0+,s)) - f(u(y,s))| \varphi(0,t,y,s)\diff t\diff y\diff s\\
    &\leq \frac{1}{\varepsilon} \int_0^\varepsilon \underbrace{\int_0^T |f(u(0+,s)) - f(u(y,s))| \diff s}_{\leq C|y|}\diff y\\
    &\leq \frac{C}{\varepsilon}\int_0^{\varepsilon}|y|\diff y\\
    &\leq C\varepsilon.
  \end{align*}
%
%
  Finally, we have
  \begin{equation*}
    \|u(\cdot,T) - u_\Dt(\cdot,T)\|_{\mathrm{L}^1(\R^+)} \leq C\left(\Dx + \Dt +\varepsilon +\varepsilon_0 + \frac{\Dx}{\varepsilon} + \frac{\Dx}{\varepsilon_0} + \frac{\Dt}{\varepsilon_0} \right).
  \end{equation*}
  In order to get a convergence rate, again we take $\lambda = \frac{\Dt}{\Dx}$ constant and minimize the right-hand side of the above estimate for $\varepsilon$ and $\varepsilon_0$. This yields $\varepsilon=\varepsilon_0=\sqrt{\Dx}$ which concludes the proof.
\end{proof}

\subsection{Convergence rate estimates on $(0,L)$}

By restricting the solution $u$ and the numerical approximation $u_\Dt$ to a bounded interval $(0,L)$ Theorem~\ref{thm: convergence rate on R+} and the estimate~\eqref{Almost convergence rate on R-} yield a convergence rate on $(0,L)$. Note that this is only possible since $f$ is strictly monotone.
\begin{corollary}[Convergence rate on $(0,L)$]\label{cor: convergence rate on (0,L)}
  Let $u$ be the entropy solution of the initial-boundary value problem~\eqref{conservation law on R+} on the bounded interval $[0,L]$ and $u_\Dt$ the numerical approximation given by~\eqref{numerical scheme on R+}. Then we have the following convergence rate estimate:
  \begin{equation*}
    \|u(\cdot,T) - u_\Dt(\cdot,T)\|_{\mathrm{L}^1(0,L)} \leq C \sqrt{\Dx}
  \end{equation*}
  for some constant $C$ independent of $\Dx$.
\end{corollary}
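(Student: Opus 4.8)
The plan is to deduce the bound on $(0,L)$ directly from the already-established convergence rate on the half-line $\R^+$, exploiting that strict monotonicity $f'>0$ turns the right endpoint $x=L$ into a pure outflow boundary. The first observation is that, because all characteristics of $u_t+f(u)_x=0$ travel to the right when $f'>0$, no boundary condition is needed at $x=L$: the value of the entropy solution on $(0,L)$ is determined entirely by the initial datum on $(0,L)$ together with the inflow datum at $x=0$ prescribed in~\eqref{conservation law on R+}. Consequently, the entropy solution posed on $[0,L]$ coincides on $(0,L)$ with the restriction of the entropy solution posed on all of $\R^+$; this is a finite-speed-of-propagation / localization argument that is available precisely for strictly monotone fluxes.

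The same localization holds on the discrete level, and there it is immediate. Since the scheme~\eqref{numerical scheme on R+} has the upwind property, the update $u_j^{n+1}=u_j^n-\lambda(f(u_j^n)-f(u_{j-1}^n))$ uses only the cell $\cell_j$ and its left neighbour $\cell_{j-1}$. Hence every cell value inside $(0,L)$ is computed from data lying in $[0,L]$ alone, so the numerical solution restricted to $(0,L)$ is identical whether one runs the scheme on $[0,L]$ or on the whole of $\R^+$. Combining the two identifications, the $\mathrm{L}^1(0,L)$ error for the bounded-interval problem equals the $\mathrm{L}^1(0,L)$ error for the half-line problem, which is in turn bounded by the full half-line error,
\[
  \|u(\cdot,T)-u_\Dt(\cdot,T)\|_{\mathrm{L}^1(0,L)} \leq \|u(\cdot,T)-u_\Dt(\cdot,T)\|_{\mathrm{L}^1(\R^+)}.
\]
Theorem~\ref{thm: convergence rate on R+} — whose proof already absorbs the inflow estimate~\eqref{Almost convergence rate on R-} through the boundary term at $x=0$ — then bounds the right-hand side by $C\sqrt{\Dx}$, which is the claimed estimate.

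The only genuinely substantive step is the first identification, namely that the bounded-interval entropy solution agrees with the restriction of the half-line solution, and this is exactly where strict monotonicity is indispensable: were $f$ not monotone, characteristics could enter through $x=L$ and a nontrivial condition would have to be imposed there, destroying the clean restriction (this is the content of the remark that the argument is \emph{only} possible since $f$ is strictly monotone). For $f'>0$ I would make the identification rigorous by comparing the two adapted-entropy formulations and checking that, in the Kru\v{z}kov sense, the $x=L$ boundary term is automatically satisfied by the outgoing trace, so that both solutions solve the same variational inequality on $(0,L)$ with identical inflow data; uniqueness then forces them to coincide. The discrete counterpart requires no such care, being built directly into the one-sided stencil.
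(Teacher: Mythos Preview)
Your argument is correct and proves the Corollary as stated, but it takes a genuinely different route from the paper. The paper does not invoke the half-line result and restrict; instead it reruns the Kuznetsov machinery directly on $(0,L)$, so that the doubled-variables functional $\Lambda_{\varepsilon,\varepsilon_0}$ acquires an additional outflow boundary term at $x=L$. Tracking this term through the proofs of Theorems~\ref{thm: convergence rate on R-} and~\ref{thm: convergence rate on R+} yields the stronger inequality~\eqref{Almost convergence rate on (0,L)}: the $\mathrm{L}^1(0,L)$ error \emph{plus} a nonnegative boundary integral at $x=L$ is bounded by $C\sqrt{\Dx}$, and the Corollary follows by dropping that term (here is where monotonicity enters in the paper's version, via $q(u,v)=|f(u)-f(v)|\geq 0$).

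Your extension-and-restriction argument is cleaner for the $\mathrm{L}^1$ bound alone, and it isolates nicely why strict monotonicity matters: $x=L$ is pure outflow, so both the bounded-interval entropy solution and the upwind scheme are literal restrictions of their half-line counterparts. What the paper's approach buys, however, is the retained boundary integral at $x=L$. This is the exact analogue of~\eqref{Almost convergence rate on R-}, and it is what the inflow estimate in the proof of Theorem~\ref{thm: convergence rate on R+} consumes at the \emph{next} interface. In the main theorem with $N\geq 2$ discontinuities one iterates through $D_1,\ldots,D_{N-1}$, each step feeding on the outflow estimate produced by the previous one. Your restriction argument discards precisely that piece of information, so while it settles the Corollary, carrying the induction past a second interior interval would force you either to reorganize the main proof or to revert to the paper's direct computation on $(0,L)$.
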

\begin{proof}
  Without repeating all calculations of Sections~\ref{subsec: Convergence rates on R-} and~\ref{subsec: convergence rates on R+} we will highlight the adjustments to the respective proofs that need to be done. If we consider solutions on $(0,L)$ instead of $\R^+$ the definition of $\Lambda_{\varepsilon,\varepsilon_0}(u,v)$ in~\ref{Definition of Lambda} needs to be adjusted so that $\Lambda_{\varepsilon,\varepsilon_0}(u,v)$ contains the term
  \begin{equation*}
    -\int_0^T\int_0^L\int_0^T q(u(L-,t),v(y,s))\varphi(L,t,y,s) \diff t\diff y\diff s
  \end{equation*}
  and all instances of $\R^+$ need to be changed to $(0,L)$. 
  Following the proofs of Theorems~\ref{thm: convergence rate on R-} and~\ref{thm: convergence rate on R+} in the same way finally yields
  \begin{multline}
    \|u(\cdot,T) - u_\Dt(\cdot,T)\|_{\mathrm{L}^1(0,L)} \\
    + \int_0^T\int_0^L\int_0^T\left( q(u(L,t),u_\Dt(y,s)) + q(u_\Dt(L,t),u(y,s)) \right)\varphi(L,t,y,s)\diff t\diff y\diff s\\
    \leq C \sqrt{\Dx}.
    \label{Almost convergence rate on (0,L)}
  \end{multline}
  Using the monotonicity of $f$ we find
  \begin{equation*}
    q(u,v) = |f(u)-f(v)|\geq 0
  \end{equation*}
  and thus the integral term in~\eqref{Almost convergence rate on (0,L)} is nonnegative which concludes the proof.
\end{proof}

\section{Statement and proof of the main theorem}\label{sec: statement and proof of the main theorem}
Our main result now reads as follows:
\begin{theorem}[Convergence rate for conservation laws with discontinuous flux]\label{thm: convergence rate for conservation laws with discontinuous flux}
Let $u$ be the entropy solution of Equation~\eqref{conservation law} and $u_\Dt$ the numerical solution given by~\eqref{numerical scheme on R}. Then we have the following convergence rate:
\begin{equation*}
  \|u(\cdot,T) - u_\Dt(\cdot,T)\|_{\mathrm{L}^1(\R)} \leq C \sqrt{\Dx}
\end{equation*}
for some constant $C$ independent of $\Dx$.
\end{theorem}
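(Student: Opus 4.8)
The plan is to split the global error into a fixed number of subdomain errors and bound each by $C\sqrt{\Dx}$ with the tools of Section~\ref{sec: Convergence rates for fluxes with one discontinuity}. First I would use the decomposition already prepared in Sections~\ref{sec: Preliminaries} and~\ref{sec: The numerical scheme}: writing $u=\sum_{i=0}^N u^{(i)}$ and $u_\Dt=\sum_{i=0}^N u^{(i)}_\Dt$ with $u^{(i)}$ and $u^{(i)}_\Dt$ supported on $D_i\times[0,T]$, the disjointness of the subdomains $D_i$ gives
\begin{equation*}
  \|u(\cdot,T) - u_\Dt(\cdot,T)\|_{\mathrm{L}^1(\R)} = \sum_{i=0}^N \|u^{(i)}(\cdot,T) - u^{(i)}_\Dt(\cdot,T)\|_{\mathrm{L}^1(D_i)}.
\end{equation*}
Since the number $N+1$ of summands is fixed, it suffices to bound each summand by $C\sqrt{\Dx}$ with a constant independent of $\Dx$.

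Next I would exploit the translation invariance of the conservation law to move the left discontinuity $\xi_i$ of each subdomain to the origin. This turns $D_0$ into $\R^-$, every interior $D_i$ ($1\le i\le N-1$) into a bounded interval $(0,L_i)$ with $L_i=\xi_{i+1}-\xi_i$, and $D_N$ into $\R^+$. On these canonical domains $u^{(i)}$ solves precisely the initial-boundary value problems~\eqref{conservation law on R-} and~\eqref{conservation law on R+} (the latter restricted to $(0,L_i)$ in the interior case), while $u^{(i)}_\Dt$ is generated by the matching schemes~\eqref{numerical scheme on R-} and~\eqref{numerical scheme on R+}, whose boundary lines are exactly the discrete Rankine--Hugoniot condition appearing in the last line of~\eqref{numerical scheme on R}. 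Theorem~\ref{thm: convergence rate on R-} on $D_0$, Corollary~\ref{cor: convergence rate on (0,L)} on each interior $D_i$, and Theorem~\ref{thm: convergence rate on R+} on $D_N$ then deliver the desired $\sqrt{\Dx}$ bound for each term, provided the boundary datum fed into each problem is controlled.

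The coupling of the subdomains is the heart of the argument, and I would organize it as an induction on $i$. The key observation is that the proofs in Section~\ref{sec: Convergence rates for fluxes with one discontinuity} furnish not merely the clean rate but the sharper ``almost convergence rate'' estimates~\eqref{Almost convergence rate on R-} and~\eqref{Almost convergence rate on (0,L)}, in which the subdomain error is accompanied by a \emph{nonnegative} boundary term at the right endpoint $\xi_{i+1}$. The base case $i=0$ is exactly~\eqref{Almost convergence rate on R-}, bounding the error on $D_0$ together with this right-boundary term by $C\sqrt{\Dx}$. For the inductive step I would rerun the $\R^+$/$(0,L_i)$ argument on $D_i$: there the only contribution reaching across the interface $\xi_i$ is the left-boundary piece $\mathbf{H_1}$, which, after invoking the discrete and continuous Rankine--Hugoniot relations $f^{(i)}(u^{(i)}_\Dt(\xi_i+)) = f^{(i-1)}(u^{(i-1)}_\Dt(\xi_i-))$, collapses into a difference of $f^{(i-1)}$-values of the traces at $\xi_i-$. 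This is precisely the nonnegative right-boundary term inherited from the ``almost convergence rate'' on $D_{i-1}$, up to a remainder that is controlled by the temporal variation bound of Lemma~\ref{Lemma: Bound on temporal variation}; the remaining boundary pieces are treated verbatim as in the proof of Theorem~\ref{thm: convergence rate on R+}. Summing the $N+1$ resulting estimates and discarding the final nonnegative boundary term at $\xi_{N+1}$ (using $q\ge 0$, which holds by monotonicity) completes the argument.

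I expect the main obstacle to be this bookkeeping of boundary terms along the chain $D_0\to D_1\to\cdots\to D_N$: one must check that the right-boundary term produced on $D_{i-1}$ has exactly the form needed to absorb the left-boundary term generated on $D_i$ after the Rankine--Hugoniot substitution, and that strict monotonicity $f_u>0$ is what simultaneously legitimizes the flux inversions in~\eqref{definition c} and fixes the sign of these boundary integrals. Once the two-flux coupling of Section~\ref{sec: Convergence rates for fluxes with one discontinuity} is established, the passage to finitely many discontinuities amounts to invoking translation invariance and iterating this coupling a fixed number of times.
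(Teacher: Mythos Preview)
Your proposal is correct and follows essentially the same approach as the paper: decompose the global error as $\sum_{i=0}^N \|u^{(i)}(\cdot,T)-u^{(i)}_\Dt(\cdot,T)\|_{\mathrm{L}^1(D_i)}$ and apply Theorem~\ref{thm: convergence rate on R-} on $D_0$, Corollary~\ref{cor: convergence rate on (0,L)} on each interior $D_i$, and Theorem~\ref{thm: convergence rate on R+} on $D_N$. Your explicit inductive bookkeeping of the boundary terms---feeding the ``almost convergence rate'' estimate~\eqref{Almost convergence rate on (0,L)} at the right endpoint of $D_{i-1}$ into the $\mathbf{H_1}$-estimate at the left endpoint of $D_i$ via the Rankine--Hugoniot relation---is exactly the mechanism the paper's brief proof leaves implicit when it invokes those results (which were stated only for the two-flux case) on a chain of $N+1$ subdomains.
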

\begin{proof}
As before, we decompose the entropy solution $u$ as $u=\sum_{i=0}^N u^{(i)}$ where $u^{(i)}$, $i=0,\ldots,N$, are the respective entropy solutions on $D_i$, i.e., solutions of~\eqref{conservation law on D0} and~\eqref{conservation law on Di} respectively. Further, we decompose the numerical solution $u_\Dt$ as $\sum_{i=0}^N u^{(i)}_\Dt$ where
\begin{equation*}
  u^{(i)}_\Dt(x,t)=\begin{cases}
    u_j^n &\text{if } (x,t)\in\cell_j^n\subset D_i\times\cell^n,\\
    0 &\text{otherwise}
  \end{cases}
\end{equation*}
and $u_j^n$ is given by~\eqref{numerical scheme on R}.
Then we have
\begin{equation*}
 \|u(T) - u_{\Dt}(T)\|_{\mathrm{L}^1(\R)} = \sum_{i=0}^N \|u^{(i)}(T) - u^{(i)}_\Dt(T)\|_{\mathrm{L}^1(D_i)}.
\end{equation*}
Using Theorem~\ref{thm: convergence rate on R-} for $D_0$, Theorem~\ref{thm: convergence rate on R+} for $D_N$, and Corollary~\ref{cor: convergence rate on (0,L)} for each $D_i$, $i=1,\ldots,N-1$, shows that
\begin{equation*}
	\|u^{(i)}(T) - u^{(i)}_\Dt(T)\|_{\mathrm{L}^1(D_i)} \leq C\sqrt{\Dx}
\end{equation*}
for $i=0,\ldots,N$ which concludes the proof.
\end{proof}

\begin{remark}
  Note that the rate of Theorem~\ref{thm: convergence rate for conservation laws with discontinuous flux} is optimal in the sense that it can not be improved without further assumptions on the initial datum. This can easily be shown in the same way as in the absence of spatial dependency since the specific initial datum $u_0$ constructed by \c{S}abac in \cite{sabac} can be chosen in a way such that $u_0$ is supported away from the last discontinuity.
\end{remark}

\section{Numerical experiments}\label{sec: Numerical experiments}
To illustrate our results we now present two numerical experiments. We consider the `two flux' case
\begin{gather*}
\begin{aligned}
  u_t + (H(x)f(u) + (1-H(x))g(u))_x = 0,& &&(x,t)\in \R\times(0,T),\\
  u(x,0) = u_0(x),& &&x\in \R
\end{aligned}
\end{gather*}
where $H$ is the Heaviside function. This corresponds to switching from one $u$-dependent flux, $g$, to another, $f$.
\paragraph{Experiment 1}
In our first numerical experiment we choose $g(u)=u$ and $f(u)=\unitfrac{u^2}{2}$ such that we switch from the transport equation to the Burgers equation across $x=0$. The initial datum we consider for Experiment~$1$ is
\begin{equation*}
	u_0(x)= \begin{cases}
		0.5 &\text{if }x<-0.5,\\
		2 &\text{if }x>-0.5
	\end{cases}
\end{equation*}
which is chosen such that the Rankine--Hugoniot condition at $x=0$ gives $u(0-,t)=u(0+,t)$ before the jump at $x=-0.5$ interacts with the interface. Figure~\ref{fig: Numerical experiments 1} shows the numerical solution calculated with the scheme~\eqref{numerical scheme on R} with open boundaries in blue and the initial datum in gray (dashed line) at various times (before, during, and after interaction with the interface). We used $\Dx=\unitfrac{2}{n}$ with $n=64$, end time $T=0.9$, and $\lambda=0.5$. We clearly recognize the characteristic features of the transport equation and the Burgers equation here as the upward jump in the initial datum is transported to the right as a shock until it crosses the interface at $x=0$ where the shock, as it enters the Burgers regime, subsequently becomes a rarefaction wave.
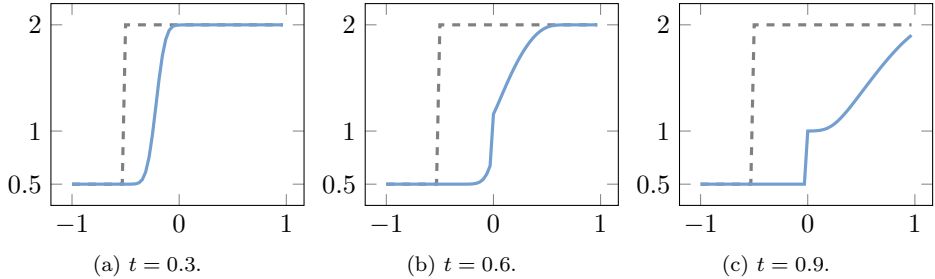
\begin{figure}
\centering
\subfloat[$t=0.3$.]{
\begin{tikzpicture}
	\begin{axis}[ymin=0.3,ymax=2.2,xtick={0,32,64},xticklabels={$-1$,$0$,$1$},ytick={0.5,1,2},yticklabels={$0.5$,$1$,$2$}]
   		\addplot[gray, dashed,very thick,mark=none] table {ic.txt};
		\addplot[skyblue1,very thick,mark=none] table {AdvectionToBurgersThird.txt};
	\end{axis}
\end{tikzpicture}
}
\subfloat[$t=0.6$.]{
\begin{tikzpicture}
	\begin{axis}[ymin=0.3,ymax=2.2,xtick={0,32,64},xticklabels={$-1$,$0$,$1$},ytick={0.5,1,2},yticklabels={$0.5$,$1$,$2$}]
   		\addplot[gray, dashed,very thick,mark=none] table {ic.txt};
		\addplot[skyblue1,very thick,mark=none] table {AdvectionToBurgersTwoThird.txt};
	\end{axis}
\end{tikzpicture}
}
\subfloat[$t=0.9$.]{
\begin{tikzpicture}
	\begin{axis}[ymin=0.3,ymax=2.2,xtick={0,32,64},xticklabels={$-1$,$0$,$1$},ytick={0.5,1,2},yticklabels={$0.5$,$1$,$2$}]
   		\addplot[gray, dashed,very thick,mark=none] table {ic.txt};
		\addplot[skyblue1,very thick,mark=none] table {AdvectionToBurgersEnd.txt};
	\end{axis}
\end{tikzpicture}
}
\caption{Numerical solution of Experiment $1$ with $\Dx=\unitfrac{2}{64}$ at various times.}
\label{fig: Numerical experiments 1}
\end{figure}
\paragraph{Experiment 2}
In our second numerical experiment we choose $g(u)=\unitfrac{u^2}{2}$ and $f(u)=u$ such that we switch from the Burgers equation to the transport equation across $x=0$. The initial datum we consider is
\begin{equation*}
	u_0(x) = 2+\operatorname{exp}(-100(x+0.75)^2).
\end{equation*}
Again, the offset of the initial datum is chosen in a way such that the Rankine--Hugoniot condition at $x=0$ gives $u(0-,t)=u(0+,t)$ before the non-constant part of $u_0$ interacts with the interface. Figure~\ref{fig: Numerical experiments 2} shows the numerical solution calculated with the scheme~\eqref{numerical scheme on R} with open boundaries in blue and the initial datum in gray (dashed line) at various times (immediately before, during, and after interaction with the interface). We used $\Dx=\unitfrac{2}{n}$ with $n=128$, end time $T=0.5$, and $\lambda=0.2$. We clearly recognize the shock formation due to the Burgers regime to the left of the interface (see Figure~\ref{fig: Numerical experiments 2} (a)).
Note that -- although difficult to see in Figure~\ref{fig: Numerical experiments 2} (c) because of numerical diffusion -- the shock is preserved over the interface (only with a different profile).
\begin{figure}
\centering
\subfloat[$t= 0.2$.]{
\begin{tikzpicture}
	\begin{axis}[ymin=1.8,ymax=3.5,xtick={0,64,128},xticklabels={$-1$,$0$,$1$},ytick={2,3},yticklabels={$\phantom{0.}2$,$3$}]
   		\addplot[gray, dashed,very thick,mark=none] table {ic2.txt};
		\addplot[skyblue1,very thick,mark=none] table {BurgersToAdvectionHalf.txt};
	\end{axis}
\end{tikzpicture}
}
\subfloat[$t=0.3$.]{
\begin{tikzpicture}
	\begin{axis}[ymin=1.8,ymax=3.5,xtick={0,64,128},xticklabels={$-1$,$0$,$1$},ytick={2,3},yticklabels={$\phantom{0.}2$,$3$}]
   		\addplot[gray, dashed,very thick,mark=none] table {ic2.txt};
		\addplot[skyblue1,very thick,mark=none] table {BurgersToAdvectionTwoThird.txt};
	\end{axis}
\end{tikzpicture}
}
\subfloat[$t=0.5$.]{
\begin{tikzpicture}
	\begin{axis}[ymin=1.8,ymax=3.5,xtick={0,64,128},xticklabels={$-1$,$0$,$1$},ytick={2,3},yticklabels={$\phantom{0.}2$,$3$}]
   		\addplot[gray, dashed,very thick,mark=none] table {ic2.txt};
		\addplot[skyblue1,very thick,mark=none] table {BurgersToAdvectionEnd.txt};
	\end{axis}
\end{tikzpicture}
}
\caption{Numerical solution of Experiment $2$ with $\Dx=\unitfrac{2}{128}$ at various times.}
\label{fig: Numerical experiments 2}
\end{figure}
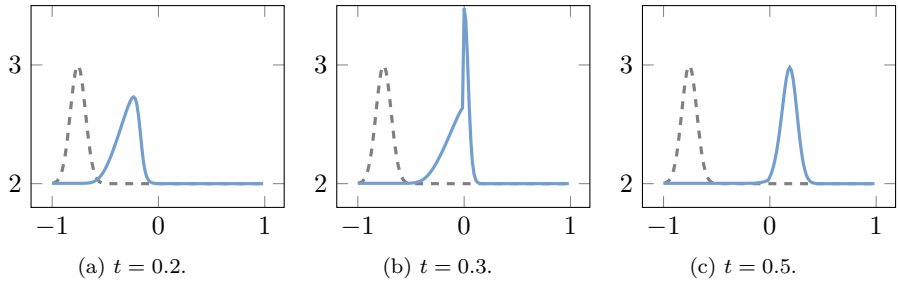

Table~\ref{tab: Convergence rates} shows the observed convergence rates of the solution at time $T=0.9$  for Experiment $1$ and at time $T=0.5$ for Experiment $2$ for various values of $\Dx$. As a reference solution, we used a numerical solution on a very fine grid ($n=2048$) in both cases. As expected from experience in the case of spatially independent flux we observe convergence rates strictly between $\unitfrac{1}{2}$ and $1$ (cf. e.g.~\cite[clawpack software]{leveque2002finite}).

\begin{table}
  \centering
  \subfloat[Experiment $1$.]{
  \begin{tabular}{rcccc}
  \toprule
  \multicolumn{1}{c}{$n$} & $\Lone$ error & $\Lone$ OOC\\
  \midrule
  $ 16$ &  $\num{1.751e-01}$ & -- \\
  $ 32$ &  $\num{1.256e-01}$ & $0.48$ \\
  $ 64$ &  $\num{8.865e-02}$ & $0.50$ \\
  $128$ &  $\num{5.918e-02}$ & $0.58$ \\
  $256$ &  $\num{3.637e-02}$ & $0.70$ \\
  $512$ &  $\num{1.978e-02}$ & $0.88$ \\
  $1024$&  $\num{8.145e-03}$ & $1.28$ \\
  \bottomrule
\end{tabular}
  }
  \hspace{2em}
  \subfloat[Experiment $2$.]{
  \begin{tabular}{rcccc}
  \toprule
  \multicolumn{1}{c}{$n$} & $\Lone$ error & $\Lone$ OOC\\
  \midrule
  $ 16$ &  $\num{2.771e-01}$ & -- \\
  $ 32$ &  $\num{1.823e-01}$ & $0.60$ \\
  $ 64$ &  $\num{1.261e-01}$ & $0.53$ \\
  $128$ &  $\num{8.390e-02}$ & $0.59$ \\
  $256$ &  $\num{5.125e-02}$ & $0.71$ \\
  $512$ &  $\num{2.780e-02}$ & $0.88$ \\
  $1024$&  $\num{1.132e-02}$ & $1.30$ \\
  \bottomrule
\end{tabular}
  }
  \caption{Convergence rates of Experiment $1$ and $2$.}
  \label{tab: Convergence rates}
\end{table}

\section{Conclusion}\label{sec: Conclusion}
Scalar conservation laws with discontinuous flux frequently occur in physical applications and several numerical schemes have been considered in the literature. In contrast to the case where the nonlinear flux does not have a spatial dependency, however, convergence rate results for monotone finite volume schemes have not been available until now.

In this paper, we have established a convergence rate for upwind-type finite volume methods for the case where $f$ is strictly monotone in $u$ and the spatial dependency $k$ is piecewise constant with finitely many discontinuities. The central idea of this paper is to split the problem into finitely many conservation laws between two neighboring discontinuities of $k$ and thus get a convergence rate as a consequence of convergence rates on bounded domains.
Here, the novel feature of this paper is the strong bound on the temporal total variation of the finite volume approximation which allows us to estimate the boundary terms in space at the discontinuities of $k$ that appear when applying the classical Kuznetsov theory to problem~\eqref{conservation law}.

As an outlook we name four possible directions of future research. A first direction would be to extend the convergence rate result of this paper to the cases where $k$ is not piecewise constant and $f$ is not monotone. Second, it might be interesting to investigate convergence rates of monotone schemes in the Wasserstein distance. In the case of spatially independent fluxes, convergence rates in the Wasserstein distance are well-understood due to Nessyahu, Tadmor and Tassa \cite{nessyconv92,nessyconv}. 
A third direction of future research might be to see whether the results of this paper can be extended to monotone schemes in conservation form, i.e., where the definition of $u_{P_i}^{n+1}$ in \eqref{numerical scheme on R} is replaced by $u_{P_i}^{n+1} = u_{P_i}^n -\lambda(f^{(i)}(u_{P_i}^n) - f^{(i-1)}(u_{P_i -1}^n))$.
Lastly, convergence rates of the front tracking method for conservation laws with discontinuous flux are highly desirable as well. In the case of spatially independent fluxes, convergence rates of the front tracking method are known in $\mathrm{L}^1$ due to Lucier \cite{lucier1986moving} and in the Wasserstein distances due to Solem \cite{solem2018convergence}.



\section*{Acknowledgments}
We thank Nils Henrik Risebro for several useful discussions and Ulrik Skre Fjordholm for his careful reading of the manuscript. We also like to thank the referees for their constructive and insightful comments.

\appendix
\section{Convergence rate estimates for general initial-boundary value problems}\label{app: Convergence rates for general IBVPs}
With the techniques developed in this paper, we can also derive a convergence rate for the initial-boundary value problem
\begin{gather}
\begin{aligned}
  u_t + f(u)_x = 0,& &&(x,t)\in (0,L)\times(0,T),\\
  u(x,0) = u_0(x),& &&x\in (0,L),\\
  u(0,t) = a(t),& &&t\in (0,T)
\end{aligned}
\label{conservation law in appendix}
\end{gather}
and the numerical scheme
\begin{gather*}
\begin{aligned}
  u_j^{n+1} = u_j^n - \lambda \left( f(u_j^n) - f(u_{j-1}^n) \right),& &&j\geq 1,~n\geq 0\\
  u_j^0 = \frac{1}{\Dx} \int_{\cell_j} u_0(x)\diff x,& &&j\geq 0,\\
  u_0^n = \frac{1}{\Dt} \int_{\cell^n} a(s)\diff s,& &&n\geq 1.
\end{aligned}
\end{gather*}
Here we need to assume that $a\in(\mathrm{L}^1\cap\BV)(0,T)$ which allows us to use the total variation of $a$ directly instead of crossing the discontinuity in Lemma~\ref{Lemma: Bound on temporal variation}. The assertion of Lemma~\ref{Lemma: Bound on temporal variation} should then read
\begin{equation*}
  \sum_{n=0}^M |u_j^{n+1} - u_j^n| \leq C(\TV(u_0) + \TV(a))
\end{equation*}
which can be used at the same place Lemma~\ref{Lemma: Bound on temporal variation} is used in Theorem~\ref{thm: convergence rate on R+}. Hence, Corollary~\ref{cor: convergence rate on (0,L)} gives the convergence rate $\mathcal{O}(\sqrt{\Dx})$ for the general initial-boundary value problem~\eqref{conservation law in appendix}.
Note that this is a higher rate than the $\mathcal{O}(\Dx^{\unitfrac{1}{3}})$ rate mentioned in \cite{ohlberger2006error}.

\bibliographystyle{siamplain}

\end{document}